\definecolor {processblue}{cmyk}{0.96,0,0,0}
\numberwithin{equation}{section}
\newcommand{\leqnomode}{\tagsleft@true\let\veqno\@@leqno}
\newcommand{\reqnomode}{\tagsleft@false\let\veqno\@@eqno}
\newcommand{\defi}[1]{{\textit{#1}}}
\newcommand{\flag}{{\mathcal{F} \ell}}
\newcommand{\C}{{\mathbb{C}}}
\renewcommand{\P}{{\mathbb{P}}}
\newcommand{\R}{{\mathbb{R}}}
\newcommand{\Z}{{\mathbb{Z}}}
\DeclareMathOperator{\Hom}{Hom}
\DeclareMathOperator{\GL}{GL}
\DeclareMathOperator{\Conv}{Conv}
\newcommand{\Cstar}{{\C^{\ast}}}
\renewcommand{\P}{{\mathscr{P}\textup{oin}}}
\def\Xw{X_w}
\def\Yw{Y_w}
\def\C{\mathbb C}
\def\R{\mathbb R}
\def\Q{\mathbb Q}
\def\Z{\mathbb Z}
\def\ind{\mathrm{asc}}
\def\asc{\mathrm{asc}}
\def\!{\mspace{-4mu}}
\newcommand{\baru}{\bar{u}}
\newcommand{\barv}{\bar{v}}
\newcommand{\QQ}{\mathsf{Q}}
\numberwithin{equation}{section}
\theoremstyle{plain}
\newtheorem{theorem}{Theorem}[section]
\newtheorem{lemma}[theorem]{Lemma}
\newtheorem{proposition}[theorem]{Proposition}
\newtheorem{Question}[theorem]{Question}
\theoremstyle{definition}
\newtheorem{example}[theorem]{Example}
\newtheorem{remark}[theorem]{Remark}
\begin{document}
	
	\title[Poincar\'{e} polynomials of $\Yw$]{Poincar\'{e} polynomials of generic torus orbit closures in Schubert varieties}
	
	\date{\today}
	
	\author[Eunjeong Lee]{Eunjeong Lee}
	\address[E. Lee]{Center for Geometry and Physics, Institute for Basic Science (IBS), Pohang 37673, Republic of Korea}
	\email{eunjeong.lee@ibs.re.kr}
	
	\author[Mikiya Masuda]{Mikiya Masuda}
	\address[M. Masuda]{Osaka City University Advanced Mathematical Institute \&
		Department of Mathematics, Graduate School of Science, Osaka City 
		University, Sumiyoshi-ku, Sugimoto, 558-8585, Osaka, Japan}
	\email{masuda@osaka-cu.ac.jp}
	
	\author[Seonjeong Park]{Seonjeong Park}
	\address[S. Park]{Department of Mathematical Sciences, Korea Advanced Institute for Science and Technology (KAIST), 291 Daehak-ro, Yuseong-gu, Daejeon 34141, Republic of Korea}
	\email{psjeong@kaist.ac.kr}
	
	\author[Jongbaek Song]{Jongbaek Song}
	\address[J. Song]{School of Mathematics, KIAS, 85 Hoegiro Dongdaemun-gu, Seoul 02455, Republic of Korea}
	\email{jongbaek@kias.re.kr}

	
	\subjclass[2010]{Primary: 14M25, 14M15, secondary: 05A05}
	
	\keywords{Poincar\'{e} polynomials, projective toric varieties}

	\maketitle

	\begin{abstract}
		The closure of a generic torus orbit in the flag variety $G/B$ of type~$A$ is known to be a permutohedral variety and its Poincar\'e polynomial agrees with the Eulerian polynomial. In this paper, we study the Poincar\'e polynomial of a generic torus orbit closure in a Schubert variety in~$G/B$. When the generic torus orbit closure in a Schubert variety is smooth, its Poincar\'e polynomial is known to agree with a certain generalization of the Eulerian polynomial. We extend this result to an arbitrary generic torus orbit closure which is not necessarily smooth. 
	\end{abstract}

	
	\section{Introduction}

	A toric variety of (complex) dimension $n$ is a normal algebraic variety over $\C$ containing an algebraic torus $(\Cstar)^n$ as an open dense subset, such that the action of the torus on itself extends to the whole variety. Toric varieties have a lot of symmetries, and studying toric varieties are closely related to the combinatorics on a lattice. One of the most important facts in toric geometry is that there is a bijective correspondence between projective toric varieties of dimension $n$ and full-dimensional lattice polytopes in $\R^n$. 
	
	The combinatorics of a lattice polytope $P$ encodes plentiful information on geometry and topology of the toric variety $X_P$ associated to~$P$. For instance, the face structure of $P$ captures the torus orbits of $X_P$ via the Orbit-Cone correspondence in toric varieties. 
	In particular, the smoothness of the toric variety $X_P$ can be determined by the combinatorics of the polytope~$P$; the toric variety $X_P$ is smooth if and only if the primitive edge vectors emanating from each vertex $v$ of $P$ form a $\Z$-basis for the lattice $\Z^n$. Such a polytope is called \defi{smooth}.
	
	Interestingly, when $X_P$ is smooth, both equivariant and ordinary cohomology rings of $X_P$ can be precisely computed in terms of $P$. Furthermore, the Betti numbers of $X_P$ are determined by the combinatorial type of $P$. 
	The odd-degree cohomology groups of $X_P$ vanish and the even-degree Betti numbers $b_{2k}(X_P) (= \dim H^{2k}(X_P; \Q))$ are evaluated only using the number $f_i$ of the $i$-dimensional faces of $P$ for each $i = 0,1,\dots,n$ (see~\eqref{eq_Betti_numbers}). 
	
	Unlike smooth toric varieties, the Betti numbers of a singular projective toric variety $X_P$ are not determined by the combinatorial type of~$P$ in general. Indeed, there exist two combinatorially equivalent lattice polytopes for which the associated toric varieties have different Betti numbers, see~\cite{McConnell}.  There are several approaches to computing the Betti numbers of singular toric varieties, e.g., using spectral sequences (see \cite{Fis, Jor} and \cite[\S12.3]{CLStoric}), Bia\l{}ynicki-Birula decomposition~\cite{BB}, retraction sequences on polytopes~\cite{BNSS, BSS17,  SS18}.

	In this article, we concentrate on generic torus orbit closures in Schubert varieties (in Type $A$) which are singular in general, 
	and provide an explicit formula on their Betti numbers.
	The flag variety $\flag(\C^n)$ is the set of full flags in the vector space~$\C^n$ which is identified with the homogeneous space $G/B$, where $G = \GL_n(\C)$ and $B$ is the set of upper triangular matrices in $G$. 
	The standard action of the diagonal torus~$T = (\Cstar)^n$ on the vector space~$\C^n$ induces an action of $T$ on $G/B$.
	Carrell and Kurth~\cite{CarrellKurth20} proved that every torus orbit closure in the flag variety is a normal variety. Hence all the torus orbit closures in the flag variety are toric varieties. 
	
	Schubert varieties $\Xw \coloneqq \overline{BwB/B}$ are subvarieties in the flag variety $G/B$ indexed by an element $w$ of the Weyl group $W$ ($\cong\mathfrak{S}_n$, the permutation group on $n$ elements) of $G$, and they are invariant under the action of $T$ on $G/B$. A $T$-orbit $\mathcal{O}$ in $\Xw$ is said to be \defi{generic} if its closure~$\overline{\mathcal{O}}$ and $X_w$ have the same $T$-fixed points. Such a torus orbit always exists. Although there are many generic torus orbits in~$X_w$, the isomorphism class of their closures is unique (see~\cite[Proposition~1 in \S5.2]{ge-se87}). We denote by $Y_w$ the closure of a generic torus orbit in $X_w$ and we call it the \defi{generic torus orbit closure in $\Xw$}.

	For the longest element $w_0 = n \ n-1 \ \cdots \ 1$ (in one-line notation) in~$\mathfrak{S}_n$, the generic torus orbit closure $Y_{w_0}$ is the permutohedral variety, which is a smooth projective toric variety whose fan $\Sigma$ is obtained by the Weyl chambers in type~$A_{n-1}$. Note that $\Sigma$ is the normal fan of the permutohedron, the convex hull of the points~$(u(1),\dots,u(n))$ in~$\R^n$ for all $u\in\mathfrak{S}_n$. 
	
	The notion of Bruhat interval polytope is a generalization of that of permutohedron. For two permutations $v$ and $w$ with $v\leq w$ in the Bruhat order, the Bruhat interval polytope $\QQ_{v,w}$ is the convex hull of the points $(u(1),\dots,u(n))$ in~$\R^n$ for all $v\leq u\leq w$ introduced by~\cite{ts-wi15}.
	For a permutation $w\in\mathfrak{S}_n$, each generic torus orbit closure~$\Yw$ in $\Xw$ is a projective toric variety associated with the Bruhat interval polytope
	$
	\QQ_{id, w^{-1}}.
	$
	Here, $id$ is the identity element in $\mathfrak{S}_n$. 
	Bruhat interval polytopes are not smooth in general, and the first and the second authors provide a systematic way to determine whether the Bruhat interval polytope $\QQ_{id, w^{-1}}$ is smooth or not by considering a corresponding graph (see~\cite[Theorem~1.2]{le-ma20}).
	
	The first and the second authors associate a polynomial~$A_w(t)$ to each $w\in \mathfrak{S}_n$ (see~\cite[\S8]{le-ma20} or \eqref{eq_def_of_Aw}  for the definition of $A_w(t)$) using the  poset structure on $\mathfrak{S}_n$ and show that the Poincar\'{e} polynomial of $Y_w$ agrees with $A_w(t^2)$ when $Y_w$ is smooth. The following theorem is the aim of this article, which shows that the formula holds even when $Y_w$ is singular.
	\begin{theorem}[Theorem~\ref{cor_main}]
		The toric variety $\Yw$ is paved by affine spaces, and the Poincar\'e polynomial $\P(\Yw,t)= \sum_{k} b_{k}(\Yw) t^k $ of $\Yw$ is given by 
		\[
		\P(\Yw,t) =  A_w(t^2).
		\] 
	\end{theorem}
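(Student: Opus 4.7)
The plan is to exhibit $\Yw$ as paved by affine cells indexed by the vertices of the Bruhat interval polytope $\QQ_{id, w^{-1}}$, which correspond bijectively to permutations $u$ with $u \le w^{-1}$ (equivalently, to $T$-fixed points of $\Yw$). Once such a paving is produced with $\dim \Cbwu = \dwu$ for a suitable combinatorial statistic $\dwu$ matching the exponents in the definition~\eqref{eq_def_of_Aw} of $A_w(t)$, the Poincar\'e polynomial computation is immediate: the class of each affine cell contributes $t^{2\dim \Cbwu}$, the odd Betti numbers vanish, and summing over $u$ yields $A_w(t^2)$.

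The natural tool is a Bia\l{}ynicki--Birula type decomposition. I would choose a generic one-parameter subgroup $\lambda \colon \Cstar \to T$ whose pairing with the characters is strictly increasing along some linear order refining the Bruhat order, so that the $\Cstar$-fixed locus on $\Yw$ coincides with the $T$-fixed locus and the limit $\lim_{z \to 0}\lambda(z)\cdot x$ exists for every $x \in \Yw$. The BB decomposition then stratifies $\Yw$ into locally closed subsets $\Cbwu$, one per fixed point $u$. On a smooth projective toric variety, these strata are automatically affine spaces of dimension equal to the number of edges descending from the vertex $u$ with respect to $\lambda$; this is what produces the formula for smooth $\Yw$ in \cite{le-ma20}. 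The main obstacle, and the genuinely new content, is that when $\Yw$ is singular (i.e.\ the cone of $\QQ_{id,w^{-1}}$ at $u$ fails to be smooth), the BB cell is only known a priori to be an affine scheme, and may fail to be an affine space.

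To overcome this, I would give an explicit parametrization of each $\Cbwu$ using the set $\Refwu$ of reflections (or the associated edges of $\QQ_{id,w^{-1}}$ descending from $u$). Concretely, using the $T$-equivariant embedding of $\Yw$ into $\Xw \subset G/B$, one can translate $u$ to the identity coset, work in the opposite big cell, and produce coordinates on the BB cell by intersecting $\Yw$ with the appropriate $T$-invariant open affine neighborhood of $u$. The key point is to show that in these coordinates $\Cbwu$ is cut out by equations that can be solved to express it as $\C^{\dwu}$, even when the toric chart at $u$ is singular. I expect this to follow by induction on the weak Bruhat order (removing one simple reflection at a time from $w$) combined with a careful analysis, via the graph $\Gwu$ of edges of $\QQ_{id,w^{-1}}$ at $u$, of which reflections in $\Refwu$ independently parametrize the ascending directions; here the combinatorial input from \cite[Theorem~1.2]{le-ma20} and the structural results it is based on will be essential.

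Having established the paving, the numerology is a bookkeeping exercise: I would match $\dim \Cbwu$ with the exponent of $u$ appearing in $A_w(t)$ by reading both off the same set $\Refwu$ (or equivalently the descending edges of $\QQ_{id,w^{-1}}$ at~$u$). Summing $t^{2\dwu}$ over all $u \le w^{-1}$ then recovers $A_w(t^2)$. The conclusion that $\Yw$ has vanishing odd cohomology and $b_{2k}(\Yw) = \#\{u : \dwu = k\}$ follows from the standard fact that a variety paved by affine cells has a cohomology basis given by the closures of those cells, so the Poincar\'e polynomial is the generating function of cell dimensions.
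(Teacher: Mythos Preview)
Your approach via Bia\l{}ynicki--Birula decomposition is genuinely different from the paper's, which the authors themselves acknowledge as a viable alternative (they cite Yamanaka for this). The paper instead works entirely on the polytope side: it uses the machinery of \emph{retraction sequences} on $\QQ_{id,w^{-1}}$, and the core technical content is showing that at each vertex $\baru$ the ascending edges (with respect to a generic linear functional $h$) span a \emph{face} of the polytope. This is exactly the polytope-theoretic analogue of your obstacle that the BB cell at a singular vertex need not be an affine space; once the ascending edges form a face, the retraction-sequence formalism (Lemma~\ref{lem_height_ftn_defines_ret_seq}, Proposition~\ref{prop_affine_paving}) together with the fact that simple vertices of Bruhat interval polytopes are automatically smooth (Lemma~\ref{lemm:smooth-retraction}) gives the affine paving directly.

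Where your proposal remains a sketch is precisely at this step. You write that the explicit parametrization of $\Cbwu$ should follow ``by induction on the weak Bruhat order \dots\ combined with a careful analysis via the graph $\Gwu$,'' but you do not isolate what structural property of $\Gwu$ makes this go through. The paper's answer is concrete and combinatorial: the graph $\Gamma_w(u)^+$ of ascending edges is a disjoint union of paths (Lemma~\ref{lemm:2-2}), and these paths are pairwise \emph{non-crossing} (Lemma~\ref{lemm:3-2}, via Lemma~\ref{lemm:3-1}). This non-crossing property is what allows one to build a linear functional $F$ (Lemma~\ref{lemma_existence_of_f}) that vanishes on all ascending edge directions and is strictly positive on all descending ones, hence cutting out the desired face. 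An analogous combinatorial fact would be the missing ingredient in your BB approach as well; without it, your inductive scheme has no clear mechanism for ruling out that the tangent cone at $\baru$ in the ascending directions is genuinely singular.
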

	
	The main tool used in the proof of the foregoing theorem is \defi{retraction sequences} on polytopes (see Theorem~\ref{thm_poincare_polynomial_ascending_chains}).
	We would like to mention that one may get the same result using Bia\l{}ynicki-Birula decomposition (see~\cite{Yamanaka}).  
	
	This paper is organized as follows. In Section~\ref{backgrounds_polytopes_retraction}, we provide an overview on the relation between lattice polytopes and projective toric varieties (see Theorem~\ref{thm_fundamental_thm_of_toric_var}). Also, we present an explanation of retraction sequences,
	and how to compute the Poincar\'{e} polynomial in Theorem~\ref{thm_poincare_polynomial_ascending_chains}.
	In Section~\ref{sec_generic_torus_orbit_closures_in_Xw}, we study generic torus orbit closures in Schubert varieties and their Poincar\'e polynomials in Theorem~\ref{cor_main} whose proof is given in Section~\ref{section_proof}.
	We enclose the manuscript providing concluding remarks in Section~\ref{sec_concluding_remarks}.
	
	\smallskip
	\noindent\textbf{Acknowledgments}
	We are grateful to Hitoshi Yamanaka for explaining his approach to computing Poincar\'e polynomials of generic torus orbit closures in Schubert varieties. 
	Lee was supported by IBS-R003-D1. Masuda was supported in part by JSPS Grant-in-Aid for Scientific Research 16K05152 and a bilateral program between JSPS and RFBR. Park was supported by the Basic Science Research Program through the National Research Foundation of Korea (NRF) funded by the Government of Korea (NRF-2018R1A6A3A11047606). Song was supported by Basic Science Research Program through the National Research Foundation of Korea (NRF) funded by the Ministry of Education (NRF-2018R1D1A1B07048480) and a KIAS Individual Grant (MG076101) at Korea Institute for Advanced Study.

	\section{Backgrounds: polytopes and projective toric varieties} 
	\label{backgrounds_polytopes_retraction}
	The aim of this section is to study the Poincar\'{e} polynomial of certain toric varieties. The main idea is to use a  retraction sequence of a polytope introduced in \cite{BNSS, BSS17, SS18}. We first begin by reviewing some facts on projective toric varieties associated with lattice polytopes and their properties which are necessary to our aim. We refer to \cite{CLStoric} for more details. 
	
	Let $P$ be a full-dimensional polytope in $\R^n$. Then it can be defined by finitely many linear inequalities:
	\begin{equation}\label{eq_P}
	P = \{ \mathbf m \in \R^n \mid \langle \mathbf m, \mathbf v_j \rangle \leq \lambda_j \quad \text{ for } j = 1,\dots,r\}
	\end{equation}
	where $\mathbf v_j \in \R^n$ and $\lambda_j \in \R$. 
	We may assume that there are no redundant inequalities in~\eqref{eq_P}. For each vector $\mathbf v_j$ in the defining inequality, we obtain a facet, i.e.,  a codimension one face, of $P$ as follows:
	\[
	F_j = P \cap \{\mathbf m \in \R^n \mid \langle \mathbf m, \mathbf v_j \rangle = \lambda_j\}.
	\]
	We say that $P$ is a \defi{lattice polytope} if its vertices are in the lattice $\Z^n \subset \R^n$. 
	
	Given a lattice polytope $P$, one can define a toric variety $X_P$ as follows. Each face $Q$ of $P$ gives an affine toric variety ${\rm Spec}(\mathbb{C}[S_Q])$, where $S_Q$ is the semigroup generated by lattice points of $\{\mathbf{m}-\mathbf{m}' \in \mathbb{Z}^n \mid \mathbf{m} \in P\cap \mathbb{Z}^n\}$ for some lattice point $\mathbf{m}' \in Q\cap \mathbb{Z}^n$. It is straightforward to see that if $Q'$ is a face of $Q$, then ${\rm Spec}(\mathbb{C}[S_Q]) \subset {\rm Spec}(\mathbb{C}[S_{Q'}])$. Now, the toric variety $X_P$ associated to $P$ is defined by gluing these affine toric varieties with respect to the face structure of $P$. 
	
	Notice that ${\rm Spec}(\C[\Z^n])\cong (\C^\ast)^n$ is a subset of ${\rm Spec}(\mathbb{C}[S_Q])$ for all faces $Q$ of~$P$, which is identified by gluing. Hence we have ${\rm Spec}(\C[\Z^n]) \subset X_P$ which yields a natural action of $(\C^\ast)^n$ on $X_P$. 
	
	Next theorem tells us that the above association of $X_P$ from $P$ is indeed a bijection. 
	\begin{theorem}{\cite[Theorem~6.2.1]{CLStoric}}\label{thm_fundamental_thm_of_toric_var}\label{thm_bijection_P_and_X}
		There is a bijection:
		\[
		\begin{tikzcd}
		\{ P \subset \R^n \mid \text{ $P$ is  a full-dimensional lattice polytope}\} \arrow[<->,d,"1-1"]\\
		\{ (X, D) \mid  \text{$X$ is a projective toric variety, $D$ is a  $(\C^\ast)^n$-invariant ample divisor on $X$}\}
		\end{tikzcd}	
		\]
	\end{theorem}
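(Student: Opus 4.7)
The plan is to construct maps in both directions and show they are mutually inverse, making use of the classical dictionary between Cartier divisors on a toric variety and lattice support functions on its fan.

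For the forward direction $P\mapsto (X_P,D_P)$, I start from a full-dimensional lattice polytope $P$ written as in~\eqref{eq_P}. The affine gluing construction already described in the excerpt produces the toric variety $X_P$ equipped with its $(\C^\ast)^n$-action. I would next identify the fan of $X_P$ with the inner normal fan $\Sigma_P$ of $P$, so that projectivity of $X_P$ follows from completeness of $\Sigma_P$ together with the polytopal embedding supplied by the lattice points of a sufficiently large multiple $kP$. Define the $T$-invariant Weil divisor $D_P \coloneqq \sum_{j=1}^{r} \lambda_j D_{F_j}$, where $D_{F_j}$ is the prime divisor corresponding to the facet $F_j$ with inner normal $\mathbf{v}_j$. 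Its support function on $\Sigma_P$ is exactly the support function of $P$, which is strictly convex because $P$ is full-dimensional and no inequality in~\eqref{eq_P} is redundant. This strict convexity is precisely the toric criterion for $D_P$ to be both Cartier and ample.

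For the backward direction $(X,D)\mapsto P_D$, let $\Sigma$ be the fan of $X$, with rays $\rho$ and primitive ray generators $u_\rho$, and write $D=\sum_\rho a_\rho D_\rho$ for the $T$-invariant ample divisor $D$. I would set
\[
P_D \coloneqq \{\mathbf{m}\in \R^n \mid \langle \mathbf{m}, u_\rho\rangle \geq -a_\rho \text{ for every ray } \rho\in\Sigma(1)\}.
\]
Since $D$ is Cartier, the data $\{a_\rho\}$ assemble into a $\Sigma$-piecewise linear function taking integer values on lattice points of each maximal cone; this forces every vertex of $P_D$ to lie in $\Z^n$, so $P_D$ is a lattice polytope. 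Ampleness of $D$ translates, via the toric Kleiman criterion, into strict convexity of this support function, which guarantees that $P_D$ is full-dimensional and that its inner normal fan recovers $\Sigma$.

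Finally, I would verify that the two assignments are mutually inverse. The identity $P_{D_P}=P$ is immediate from comparing the defining inequalities of $P_D$ with~\eqref{eq_P}, since the $\mathbf{v}_j$ are precisely the primitive inner normals of $P$ and the $\lambda_j$ are the values of its support function on them. For $(X_{P_D},D_{P_D})\cong (X,D)$, matching normal fans yields an equivariant isomorphism $X_{P_D}\cong X$, and then reading off the coefficients $\lambda_\rho = a_\rho$ from the facet inequalities of $P_D$ identifies $D_{P_D}$ with $D$. The main obstacle, and the place where I would need to invoke the substantive toric theory rather than just combinatorics, is the equivalence between ampleness of a torus-invariant Cartier divisor and strict convexity of its support function: this is what forces full-dimensionality of $P_D$ and ensures that the normal fan of $P_D$ really equals $\Sigma$ rather than some coarsening of it.
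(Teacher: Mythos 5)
The paper does not actually prove this statement: it is quoted verbatim from Cox--Little--Schenck \cite[Theorem~6.2.1]{CLStoric} as background, and the section containing it is expository. So there is no ``paper's proof'' to compare against; what you have written is essentially the standard argument found in the cited reference, and the comparison should be made with that.

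Your outline is the right one and hits the key points: normal fan realization of $X_P$; the divisor $D_P = \sum_j \lambda_j D_{F_j}$ whose support function equals the support function of $P$; the equivalence of ampleness with strict convexity; the reverse construction $P_D$; and the observation that strict convexity is what prevents the normal fan of $P_D$ from being a proper coarsening of $\Sigma$. Two places deserve tightening. First, the logical order of ``Cartier forces lattice vertices'' and ``ample forces full-dimensionality'' is slightly off as written: Cartier gives you a collection of lattice points $m_\sigma \in \Z^n$ (one per maximal cone, namely the linear piece of the support function on $\sigma$), and one must separately argue that these $m_\sigma$ account for \emph{all} the vertices of $P_D$. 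That is true for a complete fan with any Cartier divisor having bounded polytope, because every vertex of $P_D$ minimizes the evaluation against some $u$ in the interior of a maximal cone $\sigma$, which forces that vertex to be $m_\sigma$; it is worth saying this, since otherwise one might worry about vertices arising from accidental intersections of facet hyperplanes. Second, in the forward direction you invoke both a polytopal embedding via $kP$ \emph{and} strict convexity of the support function for ampleness; only one is needed (ampleness of $D_P$ follows directly from strict convexity via the toric Kleiman or Nakai criterion, and projectivity of $X_P$ then follows automatically). Finally, you silently switch between the paper's outer-normal convention in \eqref{eq_P} and the inner-normal convention for $P_D$; this is harmless but should be flagged to avoid sign errors in the verification $P_{D_P}=P$.
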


	Because of Theorem \ref{thm_bijection_P_and_X}, various geometric and topological information of $X_P$ can be read off from the combinatorics of $P$, one of which is the smoothness of $X_P$. A projective toric variety $X_P$ is smooth if and only if the polytope $P$ is smooth (see~\cite[Theorem~2.4.3]{CLStoric}). More precisely, a vertex of an $n$-dimensional polytope $P$ is \defi{simple} if there are exactly $n$ facets intersecting the vertex $v$. When all the vertices of $P$ are simple, we call $P$ a \defi{simple polytope}. A vertex $v$ of a lattice polytope $P$ is \defi{smooth} if it is simple and the set of primitive normal vectors of facets intersecting $v$ form an integral basis of $\Z^n$; otherwise, we call $v$ \defi{singular}. A lattice polytope $P$ is called \defi{smooth} if all the vertices of $P$ are smooth. In Figure~\ref{fig:def-poly}, we present two singular polytopes and one smooth polytope. In Figure~\ref{fig:def-poly_2}, consider the following vertex
	\[
	\{(0,2)\} = \{ x \in \R^2 \mid  \langle x, (-1,0) \rangle = 0\} \cap \{x \in \R^2 \mid \langle x, (1,2) \rangle = 4 \}.
	\]
	Since the set $\{(-1,0), (1,2)\}$ does not form a $\Z$-basis of $\Z^2$, the vertex $(0,2)$ is singular. 
	\begin{figure}[b]
		\begin{subfigure}[b]{.4\textwidth}
			\begin{center}
				\begin{tikzpicture}[scale=0.4]
				\filldraw[draw=black,fill=lightgray] (0,0)--(2,-1)--(3,0.5)--(1.5,2)--cycle;
				\draw[dotted](0,0)--(1.2,0.8)--(3,0.5);
				\draw[dotted] (1.5,2)--(1.2,0.8);
				\draw (2,-1)--(1.5,2);
				\end{tikzpicture}
			\end{center}
			\caption{Not simple (so singular).}
		\end{subfigure}
		\begin{subfigure}[b]{.3\textwidth}
			\begin{center}
				\begin{tikzpicture}[scale=0.4]
				\draw[gray, ->] (-1,0) -- (5,0);
				\draw[gray, ->] (0,-1)--(0,3);
				\filldraw[draw=black,fill=lightgray] (0,0)--(4,0)--(0,2)--cycle;
				\foreach \x in {-1,0,...,5}
				\foreach \y in {-1,0,...,3}
				{
					\fill (\x,\y) circle (2pt);
				}

				\end{tikzpicture}
			\end{center}
			\caption{Simple but singular.}\label{fig:def-poly_2}
		\end{subfigure}
		\begin{subfigure}[b]{.25\textwidth}
			\begin{center}
				\begin{tikzpicture}[scale=0.4]
				\draw [gray, ->] (-1,0)--(3,0);
				\draw [gray, ->] (0,-1) --(0,3);
				\filldraw[draw=black,fill=yellow] (0,0)--(2,0)--(0,2)--cycle;
				\foreach \x in {-1,0,...,3}
				\foreach \y in {-1,0,...,3}
				{
					\fill (\x,\y) circle (2pt);
				}
				
				\end{tikzpicture}
			\end{center}
			\caption{Smooth.}
		\end{subfigure}
		\caption{Example and non-examples of smooth lattice polytopes.}
		\label{fig:def-poly}
	\end{figure}
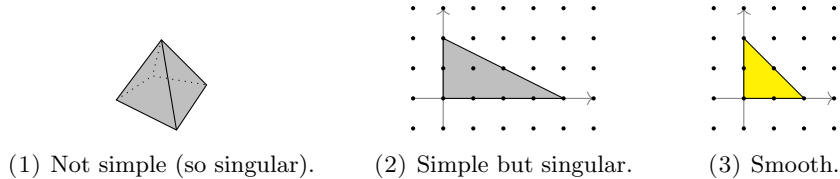
	
	For the case of a smooth toric variety $X_P$, it is well-known that the ordinary cohomology of $X_P$ is concentrated in even degrees and  
	the Betti numbers
	$
	b_{2k}(X_P)  = \dim H^{2k}(X_P,\Q)
	$
	are evaluated as follows (see~\cite[Theorem~12.3.12]{CLStoric})
	\begin{equation}\label{eq_Betti_numbers}
	b_{2k}(X_P) = \sum_{i=k}^n(-1)^{i-k} {i \choose k} f_i,
	\end{equation}
	where $f_i$ is the number of the $i$-dimensional faces of $P$.
	We refer the readers to \cite[Chapter 5]{BP15toric} and \cite[Chapter 12]{CLStoric} for more details. 
	
	Turning our attention to singular toric varieties, the relation \eqref{eq_Betti_numbers} is no longer true in general. Indeed, as we mentioned in Introduction, there are two toric varieties with different Betti numbers, but they are associated with combinatorially equivalent polytopes, see \cite{McConnell}. However, if a polytope $P$ has a \emph{retraction sequence} which we shall define below, then the Betti numbers can be captured from the combinatorics of $P$.

	Consider a sequence of triples $\{(P_i, Q_i, v_i)\}$ defined inductively whose initial term $(P_1, Q_1, v_1)=(P, P, v)$ for some simple vertex $v$ of $P$. For $i \geq 2$, $P_i$ is the union of all faces of $P_{i-1}$ not containing $v_{i-1}$ and we choose a vertex $v_i$ in $P_i$ and the maximal dimensional face $Q_i$ of $P_i$ such that $v_i$ is simple in $Q_i$. We call~$\{(P_i, Q_i, v_i)\}_{i=1}^\ell$ a \emph{retraction sequence of $P$} if it ends up with $(v_\ell, v_\ell, v_\ell)$ for some vertex $v_\ell$ of $P$ and $\ell$ is the number of vertices of $P$. 
	
	\begin{example}\label{ex_pyramid}
		Let $P$ be a pyramid with five vertices:
		\[
		P = \Conv\{ (1,0,0), (0,1,0), (-1,0,0), (0,0,-1), (0,0,1) \}.
		\] 
		A retraction sequence of $P$ is illustrated in the figure below, where the simple vertex $v_i$ and the maximal face  $Q_i$ containing $v_i$  for each term are highlighted. 
		
		\tikzset{  mid arrow/.style={postaction={decorate,decoration={
						markings,
						mark=at position .5 with {\arrow[#1]{stealth}}
		}}}}
		\begin{table}[H]
			\begin{tabular}{ccccc}
				\begin{tikzpicture}[line join=bevel,z=-5.5, scale = 0.9]
				\coordinate (A1) at (0,0,-1);
				\coordinate (A2) at (-1,0,0);
				\coordinate (A3) at (0,0,1);
				\coordinate (A4) at (1,0,0);
				\coordinate (B1) at (0,1,0);
				\coordinate (C1) at (0,-1,0);

				\fill[fill=blue!95!black, fill opacity=0.100000] (A2)--(A3)--(A4)--(B1)--(A2);

				\draw[dashed] (A2)--(A1);
				\draw[dashed] (A4)--(A1);
				\draw[dashed] (A1)--(B1);
				
				\draw  (A3)--(A4);
				\draw  (A3)--(A2);
				\draw  (A3)--(B1);
				\draw  (A2)--(B1);
				\draw  (A4)--(B1);
				
				\node[inner sep=1pt,circle,draw=green!25!black,fill=green!75!black,thick] at (A3) {};
				
				\end{tikzpicture}
				& 	
				\begin{tikzpicture}[line join=bevel,z=-5.5, scale = 0.9,
				edge/.style={color=blue!95!black, thick},
				facet/.style={fill=blue!95!black,fill opacity=0.100000}]
				\coordinate (A1) at (0,0,-1);
				\coordinate (A2) at (-1,0,0);
				\coordinate (A3) at (0,0,1);
				\coordinate (A4) at (1,0,0);
				\coordinate (B1) at (0,1,0);
				\coordinate (C1) at (0,-1,0);

				\filldraw [fill=blue!95!black, fill opacity=0.100000, draw = blue!95!black, thick] (A1)--(A4)--(B1)--cycle;

				\draw[fill=blue!95!black, fill opacity=0.100000] (B1)--(A2)--(A1);
				\draw[dotted] (A4)--(A1);
				\draw[dotted] (A1)--(B1);
				
				\draw[dotted]  (A3)--(A4);
				\draw[dotted]  (A3)--(A2);
				\draw[dotted]  (A3)--(B1);
				\draw[dotted]  (A4)--(B1);
				
				\node[inner sep=1pt,circle,draw=green!25!black,fill=green!75!black,thick] at (A4) {};
				
				\end{tikzpicture}
				&
				\begin{tikzpicture}[line join=bevel,z=-5.5, scale = 0.9]
				\coordinate (A1) at (0,0,-1);
				\coordinate (A2) at (-1,0,0);
				\coordinate (A3) at (0,0,1);
				\coordinate (A4) at (1,0,0);
				\coordinate (B1) at (0,1,0);
				\coordinate (C1) at (0,-1,0);

				\filldraw [fill=blue!95!black, fill opacity=0.100000, draw = blue!95!black, thick](A1)--(A2)--(B1)--cycle;

				\draw[dashed] (A2)--(A1);
				\draw[dashed] (A1)--(B1);
				
				\draw[dotted] (A4)--(A1);
				\draw[dotted]  (A3)--(A4);
				\draw[dotted]  (A3)--(A2);
				\draw[dotted]  (A3)--(B1);
				\draw[dotted]  (A4)--(B1);
				
				\node[inner sep=1pt,circle,draw=green!25!black,fill=green!75!black,thick] at (A2) {};
				
				\end{tikzpicture}
				&
				\begin{tikzpicture}[line join=bevel,z=-5.5, scale = 0.9]
				\coordinate (A1) at (0,0,-1);
				\coordinate (A2) at (-1,0,0);
				\coordinate (A3) at (0,0,1);
				\coordinate (A4) at (1,0,0);
				\coordinate (B1) at (0,1,0);
				\coordinate (C1) at (0,-1,0);

				\draw[fill=blue!95!black, fill opacity=0.100000, draw = blue!95!black, thick] (A1)--(B1);
				
				\draw[dotted] (A2)--(B1);
				\draw[dotted] (A2)--(A1);		
				\draw[dotted] (A4)--(A1);
				\draw[dotted]  (A3)--(A4);
				\draw[dotted]  (A3)--(A2);
				\draw[dotted]  (A3)--(B1);
				\draw[dotted]  (A4)--(B1);		
				\node[inner sep=1pt,circle,draw=green!25!black,fill=green!75!black,thick] at (A1) {};
				
				\end{tikzpicture}
				&	\begin{tikzpicture}[line join=bevel,z=-5.5, scale = 0.9]
				\coordinate (A1) at (0,0,-1);
				\coordinate (A2) at (-1,0,0);
				\coordinate (A3) at (0,0,1);
				\coordinate (A4) at (1,0,0);
				\coordinate (B1) at (0,1,0);
				\coordinate (C1) at (0,-1,0);

				\draw[dotted] (A1)--(B1);
				\draw[dotted] (A2)--(B1);
				\draw[dotted] (A2)--(A1);		
				\draw[dotted] (A4)--(A1);
				\draw[dotted]  (A3)--(A4);
				\draw[dotted]  (A3)--(A2);
				\draw[dotted]  (A3)--(B1);
				\draw[dotted]  (A4)--(B1);				
				\node[inner sep=1pt,circle,draw=green!25!black,fill=green!75!black,thick] at (B1) {};
				
				\end{tikzpicture}
			\end{tabular}
		\end{table}

	\end{example}
	
	\begin{remark}
		Every simple polytope has at least one retraction sequence (see~\cite[Proposition~2.3]{BSS17}).
		However, some singular polytopes, for instance, the rhombododecahedron appeared in \cite{McConnell} do not admit a retraction sequence. (Also, the polytope in Figure~\ref{figure_BIP_1324_4231} does not admit a retraction sequence.) Hence it would be worthwhile to study necessary and sufficient conditions for a polytope to have a retraction sequence (see~Question~\ref{question_1}).
	\end{remark}
	
	One of the information encoded in a retraction sequence of a lattice polytope $P$ is the Poincar\'{e} polynomial of $X_P$. 
	
	\begin{theorem}[{\textup see~\cite[Proposition B.3]{le-ma20} and reference therein}]\label{thm_poincare_polynomial_ascending_chains}
		Let $P$ and $X_P$ be as above. If $P$ admits a retraction sequence $\{(P_i, Q_i, v_i)\}_{i=1}^\ell$, then 
		the Poincar\'e polynomial $\P(X_P,t)$ of $X_P$ is given by 
		\begin{equation}\label{eq_poin_poly_ret}
		\P(X_P,t)=\sum_{i=1}^\ell t^{2\dim Q_i}.
		\end{equation}
	\end{theorem}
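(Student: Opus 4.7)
The plan is to prove the formula by induction on the length $\ell$ of the retraction sequence, exploiting the fact that the first simple vertex of $P$ produces an affine chart $\C^n$ in $X_P$ whose complement again carries a retraction structure. The base case $\ell = 1$ is immediate: $P$ is a single point, $X_P$ is a point, and $\P(X_P,t) = 1 = t^{0}$, matching $\dim Q_1 = 0$.

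For the inductive step, I would examine the first triple $(P_1,Q_1,v_1) = (P,P,v_1)$. Since $v_1$ is simple in $P$, the $T$-invariant affine chart $U_{v_1} \subset X_P$ at the $T$-fixed point corresponding to $v_1$ is isomorphic to $\C^n$, where $n = \dim P$. Its complement $Y := X_P \setminus U_{v_1}$ is the union of the toric subvarieties $X_F$ over facets $F$ of $P$ not containing $v_1$, whose underlying combinatorial object is precisely the polytopal subcomplex $P_2$ appearing at the second step of the retraction sequence. The remaining triples $\{(P_i,Q_i,v_i)\}_{i=2}^\ell$ then constitute a retraction sequence for $P_2$, so by a suitably generalized inductive hypothesis one obtains $\P(Y,t) = \sum_{i=2}^\ell t^{2\dim Q_i}$.

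I would then close the induction using the excision long exact sequence in compactly supported cohomology for the open-closed decomposition $X_P = U_{v_1} \sqcup Y$. Since $H_c^\ast(\C^n)$ is one-dimensional and concentrated in degree $2n$, and since the cohomology of $X_P$ and $Y$ vanishes in odd degree (verified inductively, since at each step one only attaches complex affine cells of even real dimension), the long exact sequence splits to yield $\P(X_P,t) = \P(Y,t) + t^{2n}$. Combined with the inductive formula for $\P(Y,t)$ and $\dim Q_1 = n$, this gives the desired identity.

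The main obstacle is to formulate and prove the statement for the non-irreducible union $Y$ of toric subvarieties indexed by the subcomplex $P_2$, so that the induction actually closes; one must simultaneously propagate additivity of the Poincar\'e polynomial and vanishing of odd cohomology along the induction. A cleaner alternative route is to show directly that the triples $\{(P_i,Q_i,v_i)\}_{i=1}^\ell$ assemble into an affine paving of $X_P$: for each $i$, the affine chart $C_i \cong \C^{\dim Q_i}$ of the toric subvariety $X_{Q_i}$ at the $T$-fixed point $v_i$ is a locally closed subset of $X_P$, and the defining properties of a retraction sequence guarantee combinatorially that the $C_i$ partition $X_P$ and can be ordered into a paving; the formula $\P(X_P,t) = \sum_i t^{2\dim Q_i}$ then follows from the standard fact that an affine paving $X_P = \bigsqcup_i \C^{d_i}$ has Poincar\'e polynomial $\sum_i t^{2d_i}$.
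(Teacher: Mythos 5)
The paper does not supply its own proof of this theorem---it cites \cite[Proposition B.3]{le-ma20}---so the closest internal comparison is Proposition~\ref{prop_affine_paving}, which uses the same filtration $X_i=\bigcup_{Q\subset P_{\ell-i+1}}X_Q$ that underlies your argument. That proposition, however, carries the \emph{extra} hypothesis that each $v_i$ is smooth in $Q_i$, and that extra hypothesis is exactly where your proposal breaks down.

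The gap: you repeatedly infer that the affine chart at $v_1$ (respectively, at $v_i$ inside $X_{Q_i}$) is isomorphic to $\C^n$ (respectively $\C^{\dim Q_i}$) ``since $v_1$ is simple.'' Simplicity of a vertex only says the corresponding cone of the normal fan is simplicial; it does not say the cone is unimodular. The affine chart at a simple-but-singular vertex is an abelian quotient singularity $\C^n/G$ with $G$ a nontrivial finite group (the triangle in Figure~\ref{fig:def-poly_2} is such an example). The retraction sequence in Theorem~\ref{thm_poincare_polynomial_ascending_chains} only asks for simplicity, so you cannot assume the pieces are affine spaces. In particular your ``cleaner alternative route''---declaring the $C_i\cong\C^{\dim Q_i}$ and invoking the affine-paving formula---does not go through at all: an affine paving genuinely fails for simple-but-not-smooth vertices, which is precisely why Proposition~\ref{prop_affine_paving} in the paper imposes the additional smoothness condition before drawing that conclusion.

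Your first route (open/closed decomposition and the long exact sequence in compactly supported cohomology) is salvageable, but only after replacing the false claim $U_{v_1}\cong\C^n$ with the correct statement that for a simplicial cone the chart is $\C^n/G$ and, over $\Q$, one still has $H^\ast_c(\C^n/G;\Q)\cong H^\ast_c(\C^n;\Q)^G\cong\Q$ concentrated in degree $2n$ (the finite complex-linear action preserves orientation, so it is trivial on the top class). With that substitution the additivity $\P(X_P,t)=\P(Y,t)+t^{2n}$ and the inductive propagation of odd-degree vanishing that you describe do give the formula. But as written, the argument asserts a smoothness property that the hypotheses do not grant, and the alternative affine-paving route is wrong without strengthening the hypotheses to those of Proposition~\ref{prop_affine_paving}.
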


	We now discuss a specific way to obtain a retraction sequence for a lattice polytope $P$, which enables us to describe \eqref{eq_poin_poly_ret} in a more concrete formula. 
	Let $h\colon \R^n\to \R$ be a linear function satisfying that $h(u) \neq h(v)$ if two vertices $u$ and~$v$ are joined by an edge in $P$.  Then the function $h$ gives an orientation on each edge of~$P$, namely, we give an orientation of the edge connecting two vertices $u$ and $v$ of~$P$ by $u \to v$ if $h(u) < h(v)$. 
	For each vertex $u$ of~$P$, we define 
	\begin{equation*}
	\ind(u)\coloneqq \# \{ u \to v \}.
	\end{equation*}

	\begin{lemma}\label{lem_height_ftn_defines_ret_seq}
		Let $P$ and $h$ be as above.
		For each vertex $u$ of $P$, if the direction vectors of ascending edges emanating from $u$ are linearly independent and form a face of $P$, 
		then there exists a retraction sequence of $P$ induced from $h$. 
	\end{lemma}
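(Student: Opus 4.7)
The plan is to build a retraction sequence explicitly by processing the vertices of $P$ in order of increasing $h$. Index the vertices of $P$ as $u_1,\dots,u_\ell$ with $h(u_1)\leq\cdots\leq h(u_\ell)$, breaking ties arbitrarily (the hypothesis forces any ties to occur only between non-adjacent vertices, which will not cause trouble). Set $v_i\coloneqq u_i$, and let $Q_i$ be the face of $P$ whose edges incident to $u_i$ are exactly the ascending edges at $u_i$; such a face is provided by the hypothesis, and since the corresponding direction vectors are linearly independent, $\dim Q_i=\asc(u_i)$, so $u_i$ is a simple vertex of $Q_i$. A routine induction shows that $P_i$ equals the union of faces of $P$ that contain none of $u_1,\dots,u_{i-1}$, so in particular $u_i\in P_i$.

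The crucial step is to verify that $Q_i$ itself lies in $P_i$, i.e., that no $u_j$ with $j<i$ is a vertex of $Q_i$. By construction $u_i$ has no descending edge in $Q_i$, so $u_i$ is a vertex-minimum of the linear function $h|_{Q_i}$; standard linear-programming reasoning (any local vertex-minimum of a linear function on a polytope is a global minimum) then yields $h|_{Q_i}\geq h(u_i)$. Moreover, the face of $Q_i$ on which $h$ attains this minimum cannot contain any edge, since $h$ separates the endpoints of every edge of $P$; hence the minimizing face is $\{u_i\}$, and $h(u)>h(u_i)$ for every other vertex $u$ of $Q_i$. This already excludes every $u_j$ with $j<i$ from $Q_i$ (as such $u_j$ satisfies $h(u_j)\leq h(u_i)$), and so $Q_i$ is a face of $P_i$.

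Maximality of $\dim Q_i$ among faces $Q'$ of $P_i$ containing $u_i$ as a simple vertex is then immediate: the $\dim Q'$ edges of $Q'$ at $u_i$ all have their other endpoints in $P_i$, so none of these endpoints equals any $u_j$ with $j<i$. Combined with the edge-separation hypothesis on $h$, this forces each such edge to be ascending, yielding $\dim Q'\leq\asc(u_i)=\dim Q_i$. The process terminates at step $\ell$ with $\asc(u_\ell)=0$ and $P_\ell=\{u_\ell\}$, giving $(P_\ell,Q_\ell,v_\ell)=(u_\ell,u_\ell,u_\ell)$, a genuine retraction sequence of $P$.

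The main obstacle is the second paragraph: showing $Q_i\subset P_i$ requires knowing that the \emph{no descending edge} condition at $u_i$ globally minimizes $h$ on $Q_i$, and that this minimum is attained only at $u_i$. Once this global-minimum argument is in hand, the rest of the verification reduces to combinatorial bookkeeping based on how the ordering interacts with ascending versus descending edges.
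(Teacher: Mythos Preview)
Your proof is correct and follows the same overall strategy as the paper: order the vertices by $h$, set $v_i=u_i$, and take $Q_i$ to be the face spanned by the ascending edges at $u_i$. The difference lies in how the retraction-sequence conditions are verified. The paper asserts (without proof) the geometric decomposition $P=\bigsqcup_{u\in V(P)}\hat Q(u)$, where $\hat Q(u)$ is $Q(u)$ with all faces not containing $u$ removed, and then sets $P_i=P_{i-1}\setminus\hat Q(u_{i-1})$, leaving it to the reader to see that this matches the inductive definition of a retraction sequence. You instead characterize $P_i$ combinatorially as the union of faces of $P$ avoiding $u_1,\dots,u_{i-1}$, and use the linear-programming fact that a local vertex-minimum of $h$ on a polytope is global to pin $Q_i$ inside $P_i$; you also explicitly check maximality of $\dim Q_i$. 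Your route is more self-contained and elementary, while the paper's decomposition viewpoint is terser and ties into the standard shellability/Morse-theoretic picture for polytopes. Both produce the same retraction sequence.
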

	\begin{proof}
		Let $Q(u)$ be the face defined by the edges emanating from $u$. Then, the linearly independence of the direction vectors implies that 
		$\dim Q(u) =\ind(u)$.
		Moreover, the subset $\hat{Q}(u)$ obtained by removing from $Q(u)$ all faces not containing~$u$ is homeomorphic to~$\R_\geq^{\ind(u)}$ and 
		\begin{equation*}\label{eq_polytope_decomp_affine_pieces}
		P=\bigsqcup_{u\in V(P)}\hat{Q}(u).
		\end{equation*}		
		
		We now choose a total order $u_1,\dots,u_{\ell}$ on the vertices of $P$ such that
		\[
		h(u_1) \le h(u_2) \le \cdots \le h(u_\ell)
		\]
and  define a retraction sequence by initiating $(P_1, Q_1, v_1)=(P, P, u_1)$.  For $i\geq 2$, we set 
		\begin{equation*}\label{eq_def_ret_seq_from_h}
		(P_i, Q_i, v_i)\colonequals \left( P_{i-1} \setminus \hat{Q}(u_{i-1}), Q(u_i), u_i\right).
		\end{equation*}
		Then the discussion above establishes the claim.
	\end{proof}
	
	\begin{remark}\label{rmk_existence_of_F}
		The hypothesis about the existence of the face of $P$ containing the edges emanating from the vertex $u$ in  Lemma \ref{lem_height_ftn_defines_ret_seq} can be satisfied if one finds a function $F \colon P\subset \R^n \to \R$ such that 
		\begin{equation}\label{eq_con_of_F}
		\begin{cases}
		F(u - v) = 0 &\text{ if } u \to v, \\
		F(u- v) > 0 &\text{ if } v \to u.
		\end{cases}
		\end{equation}
		Indeed, the desired face is given by
		\[
		Q(u)\colonequals P \cap (u + \ker F).
		\] 
		We note that the existence of such a function $F$ will be specified for the polytopes in which we are interested for the main result of this paper. 
	\end{remark}
	
	The following theorem is straightforward from Theorem \ref{thm_poincare_polynomial_ascending_chains} and Lemma \ref{lem_height_ftn_defines_ret_seq}. 
	\begin{theorem}\label{thm_poincare_polynomial_height_ftn}
		If there exists a  linear function $h$ satisfying the hypothesis of Lemma \ref{lem_height_ftn_defines_ret_seq},  then the Poincar\'e polynomial $\P(X_P,t)$ of $X_P$ is given by 
		\[
		\P(X_P,t) =\sum_{u\in V(P)}t^{2\cdot \ind(u)}.
		\] 
	\end{theorem}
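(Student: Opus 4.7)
The plan is to combine Lemma~\ref{lem_height_ftn_defines_ret_seq} with Theorem~\ref{thm_poincare_polynomial_ascending_chains} directly, so the argument is essentially a bookkeeping step. First I would invoke the hypothesis: since $h$ satisfies the assumptions of Lemma~\ref{lem_height_ftn_defines_ret_seq}, its proof produces an explicit retraction sequence $\{(P_i,Q_i,v_i)\}_{i=1}^{\ell}$ of $P$. Concretely, after ordering the vertices $u_1,\dots,u_\ell$ of $P$ so that $h(u_1)\leq\cdots\leq h(u_\ell)$ (which is possible because $h$ separates the endpoints of every edge), one sets $v_i=u_i$ and $Q_i=Q(u_i)$, where $Q(u_i)$ is the face of $P$ spanned by the edges emanating from $u_i$ along which $h$ increases.

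Next I would feed this retraction sequence into Theorem~\ref{thm_poincare_polynomial_ascending_chains} to get
\[
\P(X_P,t)=\sum_{i=1}^{\ell} t^{2\dim Q_i}=\sum_{i=1}^{\ell} t^{2\dim Q(u_i)}.
\]
The remaining task is to identify $\dim Q(u_i)$ with $\ind(u_i)$. This identification is already recorded inside the proof of Lemma~\ref{lem_height_ftn_defines_ret_seq}: by assumption, the direction vectors of the ascending edges at $u_i$ are linearly independent, and $Q(u_i)$ is spanned by precisely those edges, so $\dim Q(u_i)$ equals the number of ascending edges out of $u_i$, which is $\ind(u_i)$ by definition.

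Finally, since the assignment $i\mapsto u_i$ is a bijection between $\{1,\dots,\ell\}$ and the vertex set $V(P)$, I would re-index the sum over vertices to obtain
\[
\P(X_P,t)=\sum_{u\in V(P)} t^{2\cdot\ind(u)},
\]
which is the desired formula. There is no substantial obstacle here beyond keeping the references straight; the genuine content (the construction of the retraction sequence from $h$ and the cell-decomposition formula for the Poincar\'e polynomial) has already been established in Lemma~\ref{lem_height_ftn_defines_ret_seq} and Theorem~\ref{thm_poincare_polynomial_ascending_chains}, so the theorem follows immediately by assembling these two pieces.
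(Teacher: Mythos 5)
Your proposal is correct and matches the paper's approach exactly: the paper states the theorem is "straightforward from Theorem~\ref{thm_poincare_polynomial_ascending_chains} and Lemma~\ref{lem_height_ftn_defines_ret_seq}," and you have simply written out that assembly, including the key identification $\dim Q(u_i)=\ind(u_i)$ which the proof of Lemma~\ref{lem_height_ftn_defines_ret_seq} already records. Nothing is missing.
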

	
	\begin{example}
		We continue to consider the polytope in Example \ref{ex_pyramid}. 
		Choose a linear function $h \colon \R^3 \to \R$ defined by 
		\[
		h(x_1,x_2,x_3) = -2x_1 - x_2 + 3 x_3.
		\] 
		The function $h$ gives an orientation on edges of $P$ as displayed in Figure~\ref{fig_pyramid}.
		\begin{figure}
			\tikzset{  mid arrow/.style={postaction={decorate,decoration={
							markings,
							mark=at position .5 with {\arrow[#1]{stealth}}
			}}}}
			\begin{tikzpicture}[line join=bevel,z=-5.5, scale = 2]
			\coordinate (A1) at (0,0,-1);
			\coordinate (A2) at (-1,0,0);
			\coordinate (A3) at (0,0,1);
			\coordinate (A4) at (1,0,0);
			\coordinate (B1) at (0,1,0);
			\coordinate (C1) at (0,-1,0);

			\node[above] at (A1) {\small $(-1,0,0)$};
			\node[left] at (A2) {\small $(0,-1,0)$};
			\node[below] at (A3) {\small $(1,0,0)$};
			\node[right] at (A4) {\small $(0,1,0)$};
			\node[above]  at (B1) {\small$(0,0,1)$};

			\draw[dashed, mid arrow=red] (A2)--(A1);
			\draw[dashed, mid arrow=red] (A4)--(A1);
			\draw[dashed, mid arrow=red] (A1)--(B1);
			
			\draw [mid arrow=red] (A3)--(A4);
			\draw [mid arrow=red] (A3)--(A2);
			\draw [mid arrow = red] (A3)--(B1);
			\draw [mid arrow = red] (A2)--(B1);
			\draw [mid arrow = red] (A4)--(B1);
			
			\end{tikzpicture}
			\caption{The pyramid with an orientation.}
			\label{fig_pyramid}
		\end{figure}
		Then, for each vertex $u$ of $P$, the corresponding face $Q(u)$ and the number of ascending edges are given in Table~\ref{table_retraction_pyramid}.
		\tikzset{  mid arrow/.style={postaction={decorate,decoration={
						markings,
						mark=at position .5 with {\arrow[#1]{stealth}}
		}}}}
		\begin{table}[H]
			\renewcommand{\arraystretch}{1.2}
			\begin{tabular}{|c|c|c|c|c|c|}
				\hline
				\raisebox{1em}{$Q(u)$} & 	\begin{tikzpicture}[line join=bevel,z=-5.5, scale = 0.9]
				\coordinate (A1) at (0,0,-1);
				\coordinate (A2) at (-1,0,0);
				\coordinate (A3) at (0,0,1);
				\coordinate (A4) at (1,0,0);
				\coordinate (B1) at (0,1,0);
				\coordinate (C1) at (0,-1,0);

				\fill[fill=blue!95!black, fill opacity=0.100000] (A2)--(A3)--(A4)--(B1)--(A2);

				\draw[dashed, mid arrow=red] (A2)--(A1);
				\draw[dashed, mid arrow=red] (A4)--(A1);
				\draw[dashed, mid arrow=red] (A1)--(B1);
				
				\draw [mid arrow=red] (A3)--(A4);
				\draw [mid arrow=red] (A3)--(A2);
				\draw [mid arrow = red] (A3)--(B1);
				\draw [mid arrow = red] (A2)--(B1);
				\draw [mid arrow = red] (A4)--(B1);
				
				\node[inner sep=1pt,circle,draw=green!25!black,fill=green!75!black,thick] at (A3) {};
				
				\end{tikzpicture}
				& 	\begin{tikzpicture}[line join=bevel,z=-5.5, scale = 0.9,
				edge/.style={color=blue!95!black, thick},
				facet/.style={fill=blue!95!black,fill opacity=0.100000}]
				\coordinate (A1) at (0,0,-1);
				\coordinate (A2) at (-1,0,0);
				\coordinate (A3) at (0,0,1);
				\coordinate (A4) at (1,0,0);
				\coordinate (B1) at (0,1,0);
				\coordinate (C1) at (0,-1,0);

				\filldraw [fill=blue!95!black, fill opacity=0.100000, draw = blue!95!black, thick] (A1)--(A4)--(B1)--cycle;

				\draw[dashed, mid arrow=red] (A2)--(A1);
				\draw[dashed, mid arrow=red] (A4)--(A1);
				\draw[dashed, mid arrow=red] (A1)--(B1);
				
				\draw [mid arrow=red] (A3)--(A4);
				\draw [mid arrow=red] (A3)--(A2);
				\draw [mid arrow = red] (A3)--(B1);
				\draw [mid arrow = red] (A2)--(B1);
				\draw [mid arrow = red] (A4)--(B1);
				
				\node[inner sep=1pt,circle,draw=green!25!black,fill=green!75!black,thick] at (A4) {};
				
				\end{tikzpicture}
				&
				\begin{tikzpicture}[line join=bevel,z=-5.5, scale = 0.9]
				\coordinate (A1) at (0,0,-1);
				\coordinate (A2) at (-1,0,0);
				\coordinate (A3) at (0,0,1);
				\coordinate (A4) at (1,0,0);
				\coordinate (B1) at (0,1,0);
				\coordinate (C1) at (0,-1,0);

				\filldraw [fill=blue!95!black, fill opacity=0.100000, draw = blue!95!black, thick](A1)--(A2)--(B1)--cycle;

				\draw[dashed, mid arrow=red] (A2)--(A1);
				\draw[dashed, mid arrow=red] (A4)--(A1);
				\draw[dashed, mid arrow=red] (A1)--(B1);
				
				\draw [mid arrow=red] (A3)--(A4);
				\draw [mid arrow=red] (A3)--(A2);
				\draw [mid arrow = red] (A3)--(B1);
				\draw [mid arrow = red] (A2)--(B1);
				\draw [mid arrow = red] (A4)--(B1);
				
				\node[inner sep=1pt,circle,draw=green!25!black,fill=green!75!black,thick] at (A2) {};
				
				\end{tikzpicture}
				&
				\begin{tikzpicture}[line join=bevel,z=-5.5, scale = 0.9]
				\coordinate (A1) at (0,0,-1);
				\coordinate (A2) at (-1,0,0);
				\coordinate (A3) at (0,0,1);
				\coordinate (A4) at (1,0,0);
				\coordinate (B1) at (0,1,0);
				\coordinate (C1) at (0,-1,0);

				\draw[fill=blue!95!black, fill opacity=0.100000, draw = blue!95!black, thick] (A1)--(B1);

				\draw[dashed, mid arrow=red] (A2)--(A1);
				\draw[dashed, mid arrow=red] (A4)--(A1);
				\draw[dashed, mid arrow=red] (A1)--(B1);
				
				\draw [mid arrow=red] (A3)--(A4);
				\draw [mid arrow=red] (A3)--(A2);
				\draw [mid arrow = red] (A3)--(B1);
				\draw [mid arrow = red] (A2)--(B1);
				\draw [mid arrow = red] (A4)--(B1);
				
				\node[inner sep=1pt,circle,draw=green!25!black,fill=green!75!black,thick] at (A1) {};
				
				\end{tikzpicture}
				&	\begin{tikzpicture}[line join=bevel,z=-5.5, scale = 0.9]
				\coordinate (A1) at (0,0,-1);
				\coordinate (A2) at (-1,0,0);
				\coordinate (A3) at (0,0,1);
				\coordinate (A4) at (1,0,0);
				\coordinate (B1) at (0,1,0);
				\coordinate (C1) at (0,-1,0);

				\draw[dashed, mid arrow=red] (A2)--(A1);
				\draw[dashed, mid arrow=red] (A4)--(A1);
				\draw[dashed, mid arrow=red] (A1)--(B1);
				
				\draw [mid arrow=red] (A3)--(A4);
				\draw [mid arrow=red] (A3)--(A2);
				\draw [mid arrow = red] (A3)--(B1);
				\draw [mid arrow = red] (A2)--(B1);
				\draw [mid arrow = red] (A4)--(B1);
				
				\node[inner sep=1pt,circle,draw=green!25!black,fill=green!75!black,thick] at (B1) {};
				
				\end{tikzpicture}
				\\
				\hline
				$h(u)$ & -2 & -1 & 1 & 2 & 3  \\ 
				\hline
				$\asc(u)$ & 3 & 2 & 2 & 1 & 0\\
				\hline
			\end{tabular}
			\caption{A retraction sequence of the pyramid.}
			\label{table_retraction_pyramid}
		\end{table}
		\noindent Finally, the Poincar\'e polynomial $\P(X_P,t)$ of $X_P$ is given by
		\[
		\P(X_P, t) = 1  + t^2 + 2 t^4 + t^6. 
		\]
	\end{example}
	
	We finish this section by describing a geometric interpretation of Theorem \ref{thm_poincare_polynomial_height_ftn}. 
	An algebraic variety $X$ is called \emph{paved by (complex) affine spaces} if $X$ has a filtration 
	\[
	X=X_\ell \supset X_{\ell-1} \supset \cdots \supset X_1\supset X_0=\emptyset
	\]
	by closed subvarieties such that $X_i \setminus X_{i-1}$ is isomorphic to a disjoint union of affine spaces. If an algebraic variety	is paved by affine spaces, then the cycle map 
	\[
	cl_X \colon A_\ast(X) \to H_\ast(X;\Z)
	\] 
	is an isomorphism and the Betti numbers are directly computed by counting dimensions of affine spaces. We refer to \cite[Example 19.1.11]{Ful-Inter}.

	\begin{proposition}\label{prop_affine_paving}
		Let $P$ be a full-dimensional lattice polytope in $\mathbb{R}^n$. If a lattice polytope $P$ admits a retraction sequence $\{(P_i, Q_i, v_i)\}_{i=1}^\ell$ such that $v_i$ is smooth in~$Q_i$, then the associated toric variety $X_P$ is paved by affine spaces. 
	\end{proposition}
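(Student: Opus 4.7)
The plan is to construct a filtration of $X_P$ by closed $T$-invariant subvarieties whose successive complements are affine spaces, with one affine space coming from each triple $(P_i, Q_i, v_i)$ of the retraction sequence. The essential inputs are the Orbit-Face correspondence for projective toric varieties (which identifies $T$-orbits of $X_P$ with faces of $P$ via $F \leftrightarrow O_F$, with $\overline{O_F} = \bigsqcup_{F' \subseteq F} O_{F'}$) together with the fact that the affine chart at a smooth $T$-fixed point of a toric variety is isomorphic to an affine space.

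First I would define
\[
Y_i \colonequals \bigsqcup_{F \in P_i} O_F \qquad \text{and} \qquad C_i \colonequals \bigsqcup_{\substack{v_i \in F \\ F \subseteq Q_i}} O_F,
\]
where $P_i$ denotes the set of faces of $P$ not containing any of $v_1,\dots,v_{i-1}$. The inductive structure of the retraction sequence arranges that each face $F$ of $P$ is captured by exactly one $C_i$, namely the one corresponding to the smallest index $i$ with $v_i \in F$; this yields $X_P = \bigsqcup_{i=1}^{\ell} C_i$ and $Y_i \setminus Y_{i+1} = C_i$. Closedness of $Y_i$ in $X_P$ follows from the orbit-closure formula: if $F \in P_i$ and $F' \subseteq F$, then $v_j \notin F'$ for every $j < i$, whence $F' \in P_i$ and $\overline{O_F} \subseteq Y_i$.

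Next I would identify each $C_i$ with an affine space. Since $Q_i$ is a face of $P$, the closed $T$-invariant subvariety $X_{Q_i} \subseteq X_P$ is itself a projective toric variety of dimension $\dim Q_i$, and the collection $\{O_F : v_i \in F \subseteq Q_i\}$ coincides with the affine chart of $X_{Q_i}$ at the $T$-fixed point $v_i$. Because $v_i$ is smooth in $Q_i$ by hypothesis, this affine chart is isomorphic to $\mathbb{C}^{\dim Q_i}$, hence $C_i \cong \mathbb{C}^{\dim Q_i}$. Reindexing by $X_j \colonequals Y_{\ell-j+1}$ then produces a filtration
\[
\emptyset = X_0 \subset X_1 \subset \cdots \subset X_\ell = X_P
\]
by closed subvarieties with $X_j \setminus X_{j-1} = C_{\ell-j+1}$ an affine space, which is exactly the desired affine paving. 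The main technical point to verify is the combinatorial claim that every face of $P_i$ through $v_i$ is actually contained in $Q_i$; this is part of the content of the retraction sequence structure, and becomes transparent once one spells out the inductive definition of the $P_i$'s together with the maximality of $Q_i$.
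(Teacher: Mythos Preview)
Your proposal is correct and follows essentially the same approach as the paper: both define the filtration by taking, for each $i$, the union of torus orbits indexed by the faces surviving in $P_{\ell-i+1}$, and both identify the successive difference with the affine open chart of $X_{Q_i}$ at the smooth vertex $v_i$. Your exposition is in fact slightly more explicit than the paper's---you spell out why the $Y_i$ are closed via the orbit-closure relation and you flag the combinatorial point that every face of $P_i$ through $v_i$ lies in $Q_i$, whereas the paper treats these as understood.
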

	\begin{proof}
		Each face $Q$ of $P$ defines a sublattice $\Z_Q$ of $\mathbb{Z}^n$ generated by the lattice points in $Q\cap \mathbb{Z}^n$, which yields a toric subvariety $X_Q$ of $X_P$. On the other hand, there is a bijection between the set of faces of $P$ and the set of $(\mathbb{C}^\ast)^n$-orbits of $X_P$, see~\cite[Theorem 3.2.6]{CLStoric}. Let $\mathcal{O}(\sigma_Q)$ be the $(\mathbb{C}^\ast)^n$-orbit corresponding to a face $Q$ of $P$. To be more precise, $\mathcal{O}(\sigma_Q)$ is isomorphic to the torus $\Hom_\Z(\Z_Q, \C^\ast)\cong (\C^\ast)^{\dim Q}$. These two objects $X_Q$ and $\mathcal{O}(\sigma_Q)$ are related by \cite[Proposition 3.2.9]{CLStoric}, which asserts that $X_Q$ is isomorphic to $\bigcup_{Q' \text{: face of } Q}\mathcal{O}(\sigma_{Q'})$. 
		
		Now, we assume that there is a retraction sequence $\{(P_i, Q_i, v_i)\}_{i=1}^\ell$ satisfying the hypothesis. We set 
		\[
		X_i:=\bigcup_{Q\underset{\text{face}}{\subset}P_{\ell-i+1}}X_Q \cong \bigcup_{Q\underset{\text{face}}{\subset}P_{\ell-i+1}}\mathcal{O}(\sigma_Q)
		\]
		for $i=1, \dots, \ell$. 
		For example, $X_\ell = X_P$ and $X_{\ell-1}$ is isomorphic to the union of all orbits $\mathcal{O}({\sigma_Q})$ such that $Q$ is a face of $P$ not containing $v_1$. Therefore, 
		\begin{equation}\label{eq_affine_paving}
		X_\ell \setminus X_{\ell-1}\cong  \mathcal{O}({\sigma_Q})\cong {\rm Spec}(\C[S_{v_1}]) \cong \mathbb{A}^n.
		\end{equation}
		The second and the third isomorphisms follow from \cite[Theorem 3.2.6-(c)]{CLStoric} and the smoothness of $v$, respectively. In general, we claim that $X_i  \setminus X_{i-1}\cong \mathbb{A}^{\dim Q_{\ell-i+1}}$ for each $i\geq 1$, which is proved by a similar identification to \eqref{eq_affine_paving}. 
	\end{proof}

	\section{Generic torus orbit closures in Schubert varieties and \\
		their Poincar\'e polynomials}
	\label{sec_generic_torus_orbit_closures_in_Xw}
	
	Let $G = \GL_n(\C)$ and $B$ the set of upper triangular matrices in $G$. 
	Let $T$ be the set of diagonal matrices in $G$, accordingly $T \cong (\Cstar)^n$. 
	The flag variety~$\flag(\C^n) = G/B$ admits an action of the diagonal torus $T$ via the left multiplication, and $(G/B)^T = \{ uB \mid u \in W\}$, where $W$ is the Weyl group of $G$ which is identified with the group $\mathfrak{S}_n$ of permutations of $n$ elements.
	Note that the subtorus $\{\textup{diag}(t,t,\dots,t) \mid t \in \Cstar\} \subset T$ acts trivially on $\flag(\C^n)$.
	For each~$w \in \mathfrak{S}_n$, we denote by $\Xw$ the Schubert variety $\overline{BwB/B} \subset G/B$. 
	We consider the Kirillov--Kostant--Souriau symplectic form $\omega$ on $G/B$ determined by the sum of all fundamental weights. Then the corresponding moment map $\mu \colon \flag(\C^n) \to \R^n$ has the following value at each fixed point $uB \in G/B$:
	\[
	\mu(uB) = (u^{-1}(1),\dots, u^{-1}(n)) \in \R^n 
	\]
	for $u \in \mathfrak{S}_n$.
	See~\cite[Section~3]{LMP1} and reference therein for more details.
	For notational simplicity, we denote by $\baru \in \R^n$ the moment map image $\mu(uB)$ for each $u \in \mathfrak{S}_n$. 
	Since $u$ is a permutation, the image $\baru$ lies in the hyperplane 
	\begin{equation}\label{eq_hyperplane}
	\left\{(x_1,\dots,x_n) \in \R^n \,\middle|\, \sum_{i=1}^n x_i = \frac{n(n+1)}{2} \right\}.
	\end{equation} 
	Hence the whole image $\mu(G/B)$ is contained in the hyperplane~\eqref{eq_hyperplane}. 
	
	Let $\Yw$ be the generic torus orbit closure in $\Xw$. Then 
	\[
	\Yw^T = \Xw^T  = \{ uB \mid id \leq u \leq w\},
	\]
	and the image of $\Yw$ under the moment map $\mu$ is a Bruhat interval polytope
	\[
	\mu(\Yw) = \Conv\{ \baru \mid id \leq u \leq w\} = \QQ_{id, w^{-1}}. 
	\]
	Here, the second equality follows from the fact that $v \leq w $ if and only if $v^{-1} \leq w^{-1}$ (cf. \cite[Corollary~2.2.5]{BjornerBrenti}). 
	Furthermore, the set of vertices of $\QQ_{id, w^{-1}}$ is the same as the image of the $T$-fixed points
	\[
	\text{vertices of $\QQ_{id, w^{-1}}$} = \{\baru \mid id \leq u \leq w\},
	\]
	and the toric variety $\Yw$ is associated to the polytope $\QQ_{id, w^{-1}}$ via the bijective correspondence in Theorem~\ref{thm_fundamental_thm_of_toric_var}.
	Note that for $v,w\in\mathfrak{S}_n$, the Bruhat interval polytope $\QQ_{v,w}$ is defined whenever~$v\leq w$ in the Bruhat order.
	\begin{lemma}\label{lemm:smooth-retraction}
		If $\{(P_i,Q_i,v_i)\}_{i=1}^\ell$ is a retraction sequence on a Bruhat interval polytope~$\QQ_{v,w}$, then each vertex $v_i$ is smooth in $Q_i$.
	\end{lemma}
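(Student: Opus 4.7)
The plan is to establish smoothness vertex-by-vertex: I will show that for any simple vertex $\bar u$ of any face $Q$ of a Bruhat interval polytope, the primitive edge vectors of $Q$ at $\bar u$ form a $\Z$-basis of the integer sublattice spanned by $Q - \bar u$. Since every face of $\QQ_{v,w}$ is itself a Bruhat interval polytope (by \cite{ts-wi15}) and each $v_i$ is simple in $Q_i$ by definition of a retraction sequence, this claim immediately implies the lemma.

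The first step is to identify the primitive edge vectors. By the edge characterization of Bruhat interval polytopes in \cite{ts-wi15}, every edge joins a pair of the form $\bar u$ and $\bar{ut_{ij}}$ for some transposition $t_{ij}$, and the identity $(ut_{ij})^{-1} = t_{ij}u^{-1}$ yields
\begin{equation*}
\bar{ut_{ij}} - \bar u \,=\, \bigl(u^{-1}(j) - u^{-1}(i)\bigr)(e_i - e_j),
\end{equation*}
where $e_1,\dots,e_n$ denote the standard basis of $\R^n$. Hence the primitive direction of every edge at $\bar u$ has the form $\pm(e_i - e_j)$. If $\bar u$ is simple in $Q$ with $\dim Q = d$, the $d$ primitive edge vectors $\{e_{a_k} - e_{b_k}\}_{k=1}^d$ at $\bar u$ in $Q$ are linearly independent; interpreting them as edges of a graph $\Gamma$ on $\{1,\dots,n\}$, the only way to produce a linear relation among roots $e_a - e_b$ is via a cycle in $\Gamma$, so linear independence forces $\Gamma$ to be a forest.

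It remains to invoke the classical fact that a set of roots $\{e_{a_k} - e_{b_k}\}$ indexed by a forest $\Gamma$ forms a $\Z$-basis of $\Z^n \cap \textup{span}_\R\{e_{a_k} - e_{b_k}\}$. This is a short induction on the number of edges: a root incident to a leaf of $\Gamma$ has $\pm 1$ in a coordinate appearing in no other root, so any lattice vector in the span is reduced, upon subtracting an appropriate integer multiple of that root, to a lattice vector in the span of the smaller forest obtained by pruning the leaf, and the inductive hypothesis closes. The main subtle point I anticipate is matching the notion of smoothness of $v_i$ in the (not necessarily full-dimensional) face $Q_i$ with this intersection-lattice criterion, which is handled by working inside the hyperplane $\sum_{k=1}^n x_k = n(n+1)/2$ that contains $\QQ_{v,w}$ and observing that, in the affine hull of $Q_i$, the two lattices agree.
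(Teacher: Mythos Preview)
Your proof is correct and follows the same strategy as the paper: reduce to the claim that every simple vertex of a Bruhat interval polytope is smooth, after noting (via \cite{ts-wi15}) that each $Q_i$, being a face of $\QQ_{v,w}$, is itself a Bruhat interval polytope. The only difference is that the paper outsources the ``simple $\Rightarrow$ smooth'' step to \cite[Proposition~4.6]{LMP1}, whereas you supply a direct argument---the standard observation that linearly independent type-$A$ roots $e_{a_k}-e_{b_k}$ form a forest and hence a $\Z$-basis of the lattice points in their real span; this is essentially what that cited proposition does as well.
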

	\begin{proof}
		Let $\{(P_i,Q_i,v_i)\}_{i=1}^\ell$ be a retraction sequence of~$\QQ_{v,w}$. Then $Q_i$'s are Bruhat interval polytopes for all $i=1,\dots,\ell$ by~\cite[Theorem~4.1]{ts-wi15}. From the definition of a retraction sequence, each vertex $v_i$ is simple in the polytope~$Q_i$. Then, the vertex $v_i$ is smooth in $Q_i$ by~\cite[Proposition~4.6]{LMP1}. 
	\end{proof}
Combining Proposition~\ref{prop_affine_paving} and Lemma~\ref{lemm:smooth-retraction}, we obtain that if there is a retraction sequence on the Bruhat interval polytope~$\QQ_{id,w^{-1}}$, then the generic torus orbit closure $\Yw$ in $\Xw$ is paved by affine spaces.

	In the remaining part of this section, we shall observe that the Bruhat interval polytope $\QQ_{id,w^{-1}}$ has a retraction sequence, and compute the Poincar\'e polynomial of $\Yw$ in Theorem~\ref{cor_main}.
	To see the face structure of Bruhat interval polytopes, we recall the description of edges of $\QQ_{id, w^{-1}}$ from~\cite{le-ma20}.
	For $u\le w$, we set
	\[
	\widetilde{E}_w(u) =\{ (u(i),u(j))\mid i<j,\ t_{u(i),u(j)}u\le w,\ |\ell(u)-\ell(t_{u(i),u(j)}u)|=1\},
	\]
	where $t_{a,b}$ denotes the transposition interchanging $a$ and $b$. Moreover, $\ell(v)$ denotes the length of a permutation $v$. 
	We associate a directed graph~$G^{e,w}_u$ with $\widetilde{E}_w(u)$ such that the vertices are $[n] = \{1,2,\dots,n\}$ and the directed edges are given by elements in $\widetilde{E}_w(u)$. Then this graph $G^{e,w}_u$ is an acyclic digraph by~\cite[Theorem~4.19]{ts-wi15}, and we denote  by $E_w(u)$ the edge set of its transitive reduction. 
	Here, a \defi{transitive reduction} of a digraph is another digraph with the same vertices and as fewer edges as possible, such that if there is a directed path connecting two vertices, then there is also such a path in the reduction. Note that a transitive reduction of a finite acyclic digraph is unique (see~\cite{a-ga-ul72}). Since our graph $G^{e,w}_u$ is an acyclic digraph, the set $E_w(u)$ is well-defined by the uniqueness of transitive reduction of a finite acyclic digraph. 
	We call elements in $E_w(u)$ \defi{indecomposable}, and elements in $\widetilde{E}_w(u)\setminus E_w(u)$ \defi{decomposable}.
	\begin{example}
		Take $w = 3412$ and $u = 2143$. Then we have that 
		\[
		\widetilde{E}_w(u) = \{(1,4), (2,3), (2,1), (4,3)\}.
		\] 
		The corresponding graph $G^{e,w}_u$ and its transitive reduction are presented as follows.
		\begin{center}
			\begin{tikzpicture}[node/.style={circle,draw, fill=white!20, inner sep = 0.25mm}]
			\node[node] (1) at (1,1) {$1$};
			\node[node] (2) at (2,1) {$2$};
			\node[node] (3) at (3,1) {$3$};
			\node[node] (4) at (4,1) {$4$};
			
			\draw[->] (1) to [bend left] (4);
			\draw[->] (4) to [bend left] (3);
			\draw[->] (2) to [bend left] (1);
			\draw[->] (2) to [bend left] (3);
			\end{tikzpicture}
			$\quad \stackrel{\text{transitive reduction}}{\longrightarrow} \quad $
			\begin{tikzpicture}[node/.style={circle,draw, fill=white!20, inner sep = 0.25mm}]
			\node[node] (1) at (1,1) {$1$};
			\node[node] (2) at (2,1) {$2$};
			\node[node] (3) at (3,1) {$3$};
			\node[node] (4) at (4,1) {$4$};
			
			\draw[->] (1) to [bend left] (4);
			\draw[->] (4) to [bend left] (3);
			\draw[->] (2) to [bend left] (1);
			
			\end{tikzpicture}
		\end{center}
		As a result, $E_w(u) = \{(1,4), (2,1), (4,3)\}$, and the element $(2,3)$ is decomposable. 
	\end{example}
	
	The elements in $E_w(u)$ correspond to the edges of $\QQ_{id,w^{-1}}$ emanating from the vertex $\baru$. Indeed, the following is observed in~\cite[Proposition 7.7]{le-ma20}.
	\begin{proposition}
		In the polytope $\QQ_{id, w^{-1}}$, two vertices $\baru$ and $\barv$ are joined by an edge if and only if 
		\[
		v=t_{u(i),u(j)}u(=ut_{i,j}) \quad \text{ for }(u(i),u(j))\in E_w(u).
		\] 
	\end{proposition}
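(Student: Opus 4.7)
The plan is to identify the edges of $\QQ_{id,w^{-1}}$ emanating from a vertex $\baru$ with the extreme rays of the tangent cone $C_{\baru}$ at $\baru$, and then match these extreme rays with $E_w(u)$ via a combinatorial analysis of the acyclic digraph $G^{e,w}_u$.

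First, invoke the edge characterization for Bruhat interval polytopes from \cite{ts-wi15}: every edge of $\QQ_{id,w^{-1}}$ joins two vertices $\baru$ and $\barv$ with $v=ut_{i,j}$ for some transposition, and necessarily $|\ell(u)-\ell(ut_{i,j})|=1$. Hence every edge at $\baru$ has the form $[\baru,\overline{ut_{i,j}}]$ with $(u(i),u(j))\in\widetilde{E}_w(u)$. A direct computation, writing $\baru=(u^{-1}(1),\dots,u^{-1}(n))$ and comparing with $\overline{ut_{i,j}}$, yields
\[
\overline{ut_{i,j}}-\baru=(j-i)(e_{u(i)}-e_{u(j)}),
\]
where $e_k$ denotes the $k$-th standard basis vector of $\R^n$. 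Thus each candidate edge direction from $\baru$ is a positive multiple of $e_a-e_b$ for some $(a,b)\in\widetilde{E}_w(u)$.

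Second, identify the tangent cone
\[
C_{\baru} \colonequals \mathrm{cone}\{x-\baru : x\in \QQ_{id,w^{-1}}\}
\]
and show that it is generated exactly by the vectors $\{e_a-e_b : (a,b)\in\widetilde{E}_w(u)\}$. The inclusion ``$\supseteq$'' is immediate since each $\overline{ut_{i,j}}$ is a vertex of $\QQ_{id,w^{-1}}$. For the reverse inclusion, since $\QQ_{id,w^{-1}}=\Conv\{\barv : id\le v\le w\}$, it suffices to express each $\barv-\baru$ as a nonnegative combination of the listed generators; this step relies on the acyclicity of $G^{e,w}_u$ from \cite[Theorem~4.19]{ts-wi15} together with an inductive argument chaining $u$ to $v$ through single-transposition moves staying inside $[id,w]$.

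Since the edges of $\QQ_{id,w^{-1}}$ at $\baru$ correspond precisely to the extreme rays of $C_{\baru}$, it remains to show that a generator $e_a-e_b$ with $(a,b)\in\widetilde{E}_w(u)$ is extreme if and only if $(a,b)\in E_w(u)$. If $(a,b)\notin E_w(u)$, there is a directed path $a=c_0\to c_1\to\cdots\to c_k=b$ in $G^{e,w}_u$ with $k\ge 2$, and the telescoping identity $e_a-e_b=\sum_{m=1}^{k}(e_{c_{m-1}}-e_{c_m})$ exhibits $e_a-e_b$ as a positive combination of other generators, so it is not extreme. Conversely, suppose $e_a-e_b=\sum_\ell \lambda_\ell(e_{c_\ell}-e_{d_\ell})$ with $\lambda_\ell>0$ and $(c_\ell,d_\ell)\in\widetilde{E}_w(u)\setminus\{(a,b)\}$. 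This equation is a unit $a$-to-$b$ flow in the acyclic digraph $G^{e,w}_u$ supported off the edge $(a,b)$; by the standard path decomposition of flows in acyclic digraphs, it decomposes into positive combinations of directed $a$-to-$b$ paths of length at least $2$, which witness the decomposability of $(a,b)$. The main obstacle is establishing that $C_{\baru}$ is generated exactly by $\{e_a-e_b : (a,b)\in\widetilde{E}_w(u)\}$: a priori, vertices $\barv$ far from $\baru$ in the Bruhat order may contribute additional directions, and one must show these already lie in the cone spanned by the listed vectors. This requires a careful induction exploiting the interplay between covering relations in $[id,w]$ and the structure of $G^{e,w}_u$; once this step is in place, the remainder is a clean combinatorial matching between extreme rays of the cone and the transitive reduction of the underlying acyclic digraph.
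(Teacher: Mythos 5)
The paper itself does not prove this proposition; it is stated as a fact imported from \cite[Proposition~7.7]{le-ma20}, so there is no in-paper argument to compare against. Your proposal is an independent derivation, and its overall strategy (identify edges with extreme rays of the tangent cone at $\baru$, then match extreme rays to the transitive reduction via a flow argument in the acyclic digraph) is sound. The computation $\overline{ut_{i,j}}-\baru=(j-i)(e_{u(i)}-e_{u(j)})$ is correct, and the flow-decomposition step is correct: a unit $a$-to-$b$ flow in an acyclic digraph supported off the edge $(a,b)$ decomposes into directed $a$-to-$b$ paths of length at least two, which precisely witnesses decomposability.

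However, the step you flag as ``the main obstacle'' --- showing that $C_{\baru}$ is generated by $\{e_a-e_b : (a,b)\in\widetilde{E}_w(u)\}$ via an inductive argument chaining $u$ to an arbitrary $v\le w$ through single transpositions --- is not actually an obstacle once your step one is in place, and the induction you gesture at is unnecessary. You have already invoked \cite{ts-wi15} to conclude that every edge emanating from $\baru$ has direction $e_a-e_b$ for some $(a,b)\in\widetilde{E}_w(u)$; since the tangent cone of a polytope at a vertex is generated by the directions of the incident edges, this gives $C_{\baru}\subseteq\mathrm{cone}\{e_a-e_b : (a,b)\in\widetilde{E}_w(u)\}$ at once. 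Conversely, for each $(u(i),u(j))\in\widetilde{E}_w(u)$ the point $\overline{ut_{i,j}}$ is a vertex of $\QQ_{id,w^{-1}}$, so each $e_{u(i)}-e_{u(j)}$ lies in $C_{\baru}$, giving the reverse inclusion. Thus $C_{\baru}=\mathrm{cone}\{e_a-e_b : (a,b)\in\widetilde{E}_w(u)\}$ with no further work, and in fact you do not even need the full equality: to run your flow argument it suffices that (i) the edge directions at $\baru$ lie in $\{e_a-e_b : (a,b)\in\widetilde{E}_w(u)\}$ and (ii) every $e_a-e_b$ with $(a,b)\in\widetilde{E}_w(u)$ belongs to $C_{\baru}$, both of which you have. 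Once you remove the spurious difficulty, the remainder of your argument cleanly establishes that the edges at $\baru$ are exactly those indexed by $E_w(u)$.
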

	
	Now we take a linear function $h$ appropriately such that it satisfies the conditions in Lemma~\ref{lem_height_ftn_defines_ret_seq}.
	
	\begin{lemma} \label{lemm:2-1}
		Let $h\colon \R^n\to\R$ be a linear function defined by the inner product with a vector $(a_1,\dots,a_n) \in \R^n$ with $a_1>a_2>\dots>a_n$. Then for $(u(i),u(j))\in E_w(u)$, the edge connecting $\baru$ and $\barv$, where $v=t_{u(i),u(j)}u$, is ascending with respect to the function $h$ if and only if $u(i)<u(j)$.
	\end{lemma}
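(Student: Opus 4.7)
The plan is to reduce the ascent condition on an edge to a sign comparison between two coordinates of the vector $(a_1,\dots,a_n)$. First, I would unpack the effect of the multiplication $v = t_{u(i),u(j)}u$ on the moment map image. Since $\baru = (u^{-1}(1),\dots,u^{-1}(n))$ and $v^{-1} = u^{-1} t_{u(i),u(j)}$, the coordinates $v^{-1}(k)$ and $u^{-1}(k)$ agree for every $k \notin \{u(i),u(j)\}$; at the remaining two indices one has $v^{-1}(u(i)) = j$ and $v^{-1}(u(j)) = i$, whereas $u^{-1}(u(i)) = i$ and $u^{-1}(u(j)) = j$. Hence $\barv$ is obtained from $\baru$ by swapping the entries in the $u(i)$-th and $u(j)$-th slots.

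Next I would directly evaluate the height difference. All coordinates match except those two, so
\[
h(\barv)-h(\baru) \;=\; a_{u(i)}(j-i) + a_{u(j)}(i-j) \;=\; (j-i)\bigl(a_{u(i)} - a_{u(j)}\bigr).
\]
Because $\widetilde{E}_w(u)$ is defined only for pairs with $i<j$, the factor $j-i$ is strictly positive, and therefore the edge is ascending precisely when $a_{u(i)} > a_{u(j)}$. The strict monotonicity $a_1 > a_2 > \cdots > a_n$ then converts this into the condition $u(i) < u(j)$, which is the claim.

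The only point requiring care is the permutation bookkeeping: one must remember that $\baru$ records $u^{-1}$ rather than $u$, so that left multiplication of $u$ by $t_{u(i),u(j)}$ induces a swap at positions $u(i)$ and $u(j)$ of the moment map image rather than at positions $i$ and $j$. Once this is properly set up, the rest is a one-line inner-product computation, so I do not anticipate any substantive obstacle.
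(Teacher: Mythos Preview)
Your proposal is correct and follows essentially the same approach as the paper: both compute $\barv - \baru$ by tracking how $v^{-1} = u^{-1}t_{u(i),u(j)}$ differs from $u^{-1}$, obtain the height difference $(j-i)(a_{u(i)}-a_{u(j)})$, and conclude from $i<j$ and the strict monotonicity of the $a_k$. Your remark about the bookkeeping (that $\baru$ records $u^{-1}$, so the swap occurs at positions $u(i)$ and $u(j)$) is exactly the point the paper highlights as well.
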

	
	\begin{proof}
		Since $v=t_{u(i),u(j)}u$, we have $v^{-1}=u^{-1}t_{u(i),u(j)}$, that is, $u^{-1}$ and $v^{-1}$ are given as follows:
		\[
		\begin{cases}
		u^{-1}(k)=v^{-1}(k) &\text{if $k\not=u(i),u(j)$},\\
		u^{-1}(k)=i, \ v^{-1}(k)=j &\text{if $k=u(i)$},\\
		u^{-1}(k)=j,\ v^{-1}(k)=i &\text{if $k=u(j)$}.
		\end{cases}
		\]
		Hence the direction vector of the edge emanating from $\baru=(u^{-1}(1), \dots,u^{-1}(n))$ to $\barv=(v^{-1}(1),\dots,v^{-1}(n))$ is 
		\[
		\barv-\baru=(j-i)(e_{u(i)}-e_{u(j)}).
		\] 
		Since the inner product of $\barv-\baru$ with $(a_1,\dots,a_n)$ is $(j-i)(a_{u(i)}-a_{u(j)})$, it is positive if and only if $u(i)<u(j)$ because $i<j$ and $a_1>\dots>a_n$, proving the lemma. 
	\end{proof}
	
	\begin{remark}
		In the proof of \cite[Theorem 8.3]{le-ma20}, the authors say, 
		\begin{quote}
			``$e_{u(j)}-e_{u(i)}$'s for $(u(i), u(j)) \in E_w(u)$ are primitive inward-pointing edge vectors from the vertex $\mu(u)$.'' 
		\end{quote}
		\noindent
		However, this seems incorrect ($e_{u(j)}-e_{u(i)}$ should be $e_{u(i)}-e_{u(j)}$) although the conclusion is correct because the authors take the vector $(a_1,\dots,a_n)$ with $a_1<\dots<a_n$ there. 
	\end{remark}

	Motivated by Lemma~\ref{lemm:2-1}, we define 
	\begin{equation}
	\begin{split}
	E_w(u)^+&=\{ (u(i),u(j))\in E_w(u)\mid u(i)<u(j)\},\\
	E_w(u)^-&=\{ (u(i),u(j))\in E_w(u)\mid u(i)>u(j)\}.
	\end{split}
	\end{equation}
	Then 
	\[
	E_w(u)=E_w(u)^+\sqcup E_w(u)^-.
	\]
	Note that $E_w(u)^+$ (resp. $E_w(u)^-$) corresponds to ascending (resp. descending) edges emanating from $\baru$ by Lemma~\ref{lemm:2-1} with respect to the function $h$. 
	Now we set 
	\begin{equation}\label{eq_def_of_awu}
	a_w(u) \coloneqq  |E_w(u)^+|
	\end{equation}
	and define
	\begin{equation}\label{eq_def_of_Aw}
	A_w(t) \coloneqq \sum_{u \leq w} t^{a_w(u)}.
	\end{equation}
	With these notations, we present the main result of this article whose proof will be given in Section~\ref{section_proof}.
	\begin{theorem}\label{cor_main}
		Let $\Yw$ be the generic torus orbit closure of the Schubert variety~$\Xw$ for $w  \in \mathfrak{S}_n$. 
		Then the toric variety $\Yw$ is paved by affine spaces, and the Poincar\'e polynomial of $\Yw$ is given by 
		\[
		\P(\Yw,t) =A_w(t^2).
		\] 
	\end{theorem}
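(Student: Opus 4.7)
The plan is to apply Theorem~\ref{thm_poincare_polynomial_height_ftn} to the Bruhat interval polytope $P = \QQ_{id,w^{-1}}$, whose associated projective toric variety is $\Yw$. I begin with the linear function $h(x)=\langle x,(a_1,\dots,a_n)\rangle$ with strictly decreasing coefficients $a_1>\dots>a_n$ chosen generically, already singled out in Lemma~\ref{lemm:2-1}, so that $h$ separates the values of adjacent vertices. By Lemma~\ref{lemm:2-1}, at each vertex $\baru$ the ascending edges of $P$ under $h$ are in bijection with $E_w(u)^+$, so $\asc(\baru)=a_w(u)$ in the notation of~\eqref{eq_def_of_awu}.

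The heart of the argument is to verify the hypothesis of Lemma~\ref{lem_height_ftn_defines_ret_seq}: at each vertex $\baru$, the ascending edge vectors should be linearly independent and form (the edge set at $\baru$ of) a face $Q(\baru)$ of $P$. Following Remark~\ref{rmk_existence_of_F}, I plan to construct, for each $\baru$, a linear functional $F_u(x)=\sum_k c_k x_k$ satisfying $F_u(\baru-\barv)=0$ for every ascending neighbor $\barv$ and $F_u(\baru-\barv)>0$ for every descending one. Using the computation $\barv-\baru=(j-i)(e_{u(i)}-e_{u(j)})$ from the proof of Lemma~\ref{lemm:2-1}, these conditions translate into $c_{u(i)}=c_{u(j)}$ for every $(u(i),u(j))\in E_w(u)^+$ and $c_{u(j)}>c_{u(i)}$ for every $(u(i),u(j))\in E_w(u)^-$.

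The step I expect to be hardest is establishing existence of such coefficients $c_k$. Viewing $E_w(u)^+$ as an undirected graph on $[n]$, the equalities force $c$ to be constant on connected components, and the strict inequalities from $E_w(u)^-$ descend to a binary relation on those components; existence of $c$ reduces to acyclicity of this induced relation. My plan is to prove acyclicity by lifting a hypothetical cycle among components to a closed directed walk in $G^{e,w}_u$, contradicting its acyclicity from~\cite[Theorem~4.19]{ts-wi15}. Once $F_u$ is constructed, $Q(\baru):=P\cap(\baru+\ker F_u)$ is a face of $P$ by Remark~\ref{rmk_existence_of_F}, and is itself a Bruhat interval polytope by~\cite[Theorem~4.1]{ts-wi15}; the input behind Lemma~\ref{lemm:smooth-retraction} (namely~\cite[Proposition~4.6]{LMP1}) then shows $\baru$ is smooth in $Q(\baru)$, so the ascending edge vectors at $\baru$ are linearly independent and $\dim Q(\baru)=a_w(u)$.

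With the hypothesis of Lemma~\ref{lem_height_ftn_defines_ret_seq} verified, that lemma produces a retraction sequence $\{(P_i,Q_i,v_i)\}$ of $P$; each $v_i$ is smooth in the Bruhat interval polytope $Q_i$ by Lemma~\ref{lemm:smooth-retraction}, so Proposition~\ref{prop_affine_paving} yields that $\Yw$ is paved by affine spaces. Applying Theorem~\ref{thm_poincare_polynomial_height_ftn} together with $\asc(\baru)=a_w(u)$ gives
\[
\P(\Yw, t) \;=\; \sum_{u\leq w} t^{2\,\asc(\baru)} \;=\; \sum_{u\leq w} t^{2\,a_w(u)} \;=\; A_w(t^2),
\]
completing the proof.
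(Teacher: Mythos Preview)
Your overall strategy matches the paper's: use the linear functional $h$ from Lemma~\ref{lemm:2-1}, verify the hypotheses of Lemma~\ref{lem_height_ftn_defines_ret_seq} at each vertex $\baru$ by constructing a suitable $F_u$, and then invoke Proposition~\ref{prop_affine_paving} and Theorem~\ref{thm_poincare_polynomial_height_ftn}. The translation of~\eqref{eq_con_of_F} into the conditions $c_{u(i)}=c_{u(j)}$ for $(u(i),u(j))\in E_w(u)^+$ and a strict inequality for $(u(i),u(j))\in E_w(u)^-$ is correct, and reducing the existence of such $c$ to acyclicity of the induced relation on the connected components of $E_w(u)^+$ is precisely the content of the paper's Lemma~\ref{lemma_existence_of_f}. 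However, two steps in your plan do not go through as written.

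First, the lifting argument for acyclicity fails. Every edge of $G^{e,w}_u$ is directed from a smaller position to a larger one, so within a component of $E_w(u)^+$ (which by Lemma~\ref{lemm:2-2} is a directed path) one can only traverse in one direction. There is no reason the arrival point $q_{r-1}$ and the departure point $p_r$ in a given component satisfy $u^{-1}(q_{r-1})<u^{-1}(p_r)$, so a cycle among components need not lift to a closed directed walk in $G^{e,w}_u$. Acyclicity of $G^{e,w}_u$ alone is not enough here; the paper instead proves a non-crossing lemma (Lemma~\ref{lemm:3-2}, prepared by Lemma~\ref{lemm:3-1}) showing that for any two maximal ascents $I,I'$ at most one of $(I,I')$, $(I',I)$ is nonempty, and this is the substantive combinatorial work you are missing.

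Second, your derivation of linear independence is circular. The result you cite (\cite[Proposition~4.6]{LMP1}, exactly as used in Lemma~\ref{lemm:smooth-retraction}) asserts that a \emph{simple} vertex of a Bruhat interval polytope is smooth; simplicity is a hypothesis, not a conclusion. But $\baru$ being simple in $Q(\baru)$ is equivalent to the $a_w(u)$ ascending edge directions being linearly independent, which is what you want to prove. The paper establishes linear independence directly (Proposition~\ref{lemm:2-3}): by Lemma~\ref{lemm:2-2} no two elements of $E_w(u)^+$ share a first coordinate or a second coordinate, so after ordering by $u(i_r)$ the vectors $e_{u(i_r)}-e_{u(j_r)}$ form a triangular system.
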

	As  mentioned in the introduction, the formula is established when $Y_w$ is smooth in~\cite{le-ma20}. Theorem~\ref{cor_main} shows that the formula holds even when $Y_w$ is singular.
	
	We enclose this section by presenting the Poincar\'{e} polynomials of  singular toric varieties $Y_{4231}$ and $Y_{3412}$ using Theorem~\ref{cor_main}. The moment polytopes of $Y_{4231}$ and $Y_{3412}$ are  singular Bruhat interval polytopes $\QQ_{id, 4231^{-1}}$ and $\QQ_{id, 3412^{-1}}$, respectively, see Figure~\ref{figure_BIP}.
	In both polytopes, we take $h \colon \R^4 \to \R$ by the inner product with a vector $(12,2,-1,-2)$. For instance, the image of the vertex $\mu(4132) = (2,4,3,1)$ under $h$ is given by
	\[
	h(\mu(4132)) = \langle (2,4,3,1), (12,2,-1,-2) \rangle = 2\times 12 + 4 \times 2 - 3 - 2 = 27.
	\]
	We label each vertex $\baru$ with $h(\mu(u))$.  
	Therefore, the vertex $\mu(4132)=(2,4,3,1)$ is labeled by $27$ (see Figure~\ref{figure_Bruhat_polytope_4231}).
	\begin{figure}[H]
		\begin{subfigure}[b]{0.5\textwidth}
			\centering
			\tikzset{  mid arrow/.style={postaction={decorate,decoration={
							markings,
							mark=at position .5 with {\arrow[#1]{stealth}}
			}}}}		\begin{tikzpicture}%
			[x={(-0.939161cm, 0.244762cm)},
			y={(0.097442cm, -0.482887cm)},
			z={(0.329367cm, 0.840780cm)},
			scale=1.20000,
			back/.style={dashed, thin},
			edge/.style={color=black},
			facet/.style={fill=none},
			vertex/.style={inner sep=0.3pt,circle, draw=black, fill=blue!10,anchor=base, font=\scriptsize},
			vertex2/.style={inner sep=1.5pt,circle,draw=green!25!black,fill=red,thick,anchor=base},
			every label/.append style={text=black, font=\scriptsize}]
			%
			%
			\coordinate (1.00000, 2.00000, 3.00000) at (1.00000, 2.00000, 3.00000);
			\coordinate (1.00000, 2.00000, 4.00000) at (1.00000, 2.00000, 4.00000);
			\coordinate (1.00000, 3.00000, 2.00000) at (1.00000, 3.00000, 2.00000);
			\coordinate (1.00000, 3.00000, 4.00000) at (1.00000, 3.00000, 4.00000);
			\coordinate (1.00000, 4.00000, 2.00000) at (1.00000, 4.00000, 2.00000);
			\coordinate (1.00000, 4.00000, 3.00000) at (1.00000, 4.00000, 3.00000);
			\coordinate (2.00000, 1.00000, 3.00000) at (2.00000, 1.00000, 3.00000);
			\coordinate (2.00000, 1.00000, 4.00000) at (2.00000, 1.00000, 4.00000);
			\coordinate (2.00000, 3.00000, 1.00000) at (2.00000, 3.00000, 1.00000);
			\coordinate (2.00000, 3.00000, 4.00000) at (2.00000, 3.00000, 4.00000);
			\coordinate (2.00000, 4.00000, 1.00000) at (2.00000, 4.00000, 1.00000);
			\coordinate (2.00000, 4.00000, 3.00000) at (2.00000, 4.00000, 3.00000);
			\coordinate (3.00000, 1.00000, 2.00000) at (3.00000, 1.00000, 2.00000);
			\coordinate (3.00000, 1.00000, 4.00000) at (3.00000, 1.00000, 4.00000);
			\coordinate (3.00000, 2.00000, 1.00000) at (3.00000, 2.00000, 1.00000);
			\coordinate (3.00000, 2.00000, 4.00000) at (3.00000, 2.00000, 4.00000);
			\coordinate (4.00000, 1.00000, 2.00000) at (4.00000, 1.00000, 2.00000);
			\coordinate (4.00000, 1.00000, 3.00000) at (4.00000, 1.00000, 3.00000);
			\coordinate (4.00000, 2.00000, 1.00000) at (4.00000, 2.00000, 1.00000);
			\coordinate (4.00000, 2.00000, 3.00000) at (4.00000, 2.00000, 3.00000);
			\draw[edge,back, mid arrow = red] (1.00000, 2.00000, 3.00000) -- (1.00000, 2.00000, 4.00000);
			\draw[edge,back, mid arrow = red] (1.00000, 2.00000, 3.00000) -- (1.00000, 3.00000, 2.00000);
			\draw[edge,back, mid arrow = red] (1.00000, 2.00000, 3.00000) -- (2.00000, 1.00000, 3.00000);
			\draw[edge,back, mid arrow = red] (1.00000, 3.00000, 2.00000) -- (1.00000, 4.00000, 2.00000);
			\draw[edge,back, mid arrow = red] (1.00000, 3.00000, 2.00000) -- (2.00000, 3.00000, 1.00000);
			\draw[edge,back, mid arrow = red] (2.00000, 1.00000, 3.00000) -- (2.00000, 1.00000, 4.00000);
			\draw[edge,back, mid arrow = red] (2.00000, 1.00000, 3.00000) -- (3.00000, 1.00000, 2.00000);
			\draw[edge,back, mid arrow = red] (2.00000, 3.00000, 1.00000) -- (2.00000, 4.00000, 1.00000);
			\draw[edge,back, mid arrow = red] (2.00000, 3.00000, 1.00000) -- (3.00000, 2.00000, 1.00000);
			\draw[edge,back, mid arrow = red] (3.00000, 1.00000, 2.00000) -- (3.00000, 2.00000, 1.00000);
			\draw[edge,back, mid arrow = red] (3.00000, 1.00000, 2.00000) -- (4.00000, 1.00000, 2.00000);
			\draw[edge,back, mid arrow = red] (3.00000, 2.00000, 1.00000) -- (4.00000, 2.00000, 1.00000);
			\node[vertex, label={[label distance = -3mu] left:{$(\mspace{-2mu}1,\!3,\!2,\!4\mspace{-2mu})$}}] at (1.00000, 3.00000, 2.00000)     {$8$};
			\node[vertex, label={above:{$(\mspace{-2mu}2,\!3,\!1,\!4\mspace{-2mu})$}}] at (2.00000, 3.00000, 1.00000)     {$21$};
			\node[vertex,label={[label distance = -3mu] below left:{$(\mspace{-2mu}1,\!2,\!3,\!4\mspace{-2mu})$}}] at (1.00000, 2.00000, 3.00000)     {$5$};
			\node[vertex,label={below:{$(2,\!1,\!3,\!4\mspace{-2mu})$}}] at (2.00000, 1.00000, 3.00000)     {$15$};
			\node[vertex,label={above:{$(\mspace{-2mu}3,\!2,\!1,\!4\mspace{-2mu})$}}] at (3.00000, 2.00000, 1.00000)     {$31$};
			\node[vertex,label={below:{$(\mspace{-2mu}3,\!1,\!2,\!4\mspace{-2mu})$}}] at (3.00000, 1.00000, 2.00000)     {$28$};
			\fill[facet] (4.00000, 2.00000, 3.00000) -- (2.00000, 4.00000, 3.00000) -- (2.00000, 4.00000, 1.00000) -- (4.00000, 2.00000, 1.00000) -- cycle {};
			\fill[facet] (4.00000, 2.00000, 3.00000) -- (2.00000, 4.00000, 3.00000) -- (2.00000, 3.00000, 4.00000) -- (3.00000, 2.00000, 4.00000) -- cycle {};
			\fill[facet] (4.00000, 2.00000, 3.00000) -- (3.00000, 2.00000, 4.00000) -- (3.00000, 1.00000, 4.00000) -- (4.00000, 1.00000, 3.00000) -- cycle {};
			\fill[facet] (4.00000, 2.00000, 3.00000) -- (4.00000, 1.00000, 3.00000) -- (4.00000, 1.00000, 2.00000) -- (4.00000, 2.00000, 1.00000) -- cycle {};
			\fill[facet] (2.00000, 4.00000, 3.00000) -- (1.00000, 4.00000, 3.00000) -- (1.00000, 3.00000, 4.00000) -- (2.00000, 3.00000, 4.00000) -- cycle {};
			\fill[facet] (2.00000, 4.00000, 3.00000) -- (1.00000, 4.00000, 3.00000) -- (1.00000, 4.00000, 2.00000) -- (2.00000, 4.00000, 1.00000) -- cycle {};
			\fill[facet] (3.00000, 2.00000, 4.00000) -- (2.00000, 3.00000, 4.00000) -- (1.00000, 3.00000, 4.00000) -- (1.00000, 2.00000, 4.00000) -- (2.00000, 1.00000, 4.00000) -- (3.00000, 1.00000, 4.00000) -- cycle {};
			\draw[edge, mid arrow = red] (1.00000, 2.00000, 4.00000) -- (1.00000, 3.00000, 4.00000);
			\draw[edge, mid arrow = red] (1.00000, 2.00000, 4.00000) -- (2.00000, 1.00000, 4.00000);
			\draw[edge, mid arrow = red] (1.00000, 3.00000, 4.00000) -- (1.00000, 4.00000, 3.00000);
			\draw[edge, mid arrow = red] (1.00000, 3.00000, 4.00000) -- (2.00000, 3.00000, 4.00000);
			\draw[edge, mid arrow = red] (1.00000, 4.00000, 2.00000) -- (1.00000, 4.00000, 3.00000);
			\draw[edge, mid arrow = red] (1.00000, 4.00000, 2.00000) -- (2.00000, 4.00000, 1.00000);
			\draw[edge, mid arrow = red] (1.00000, 4.00000, 3.00000) -- (2.00000, 4.00000, 3.00000);
			\draw[edge, mid arrow = red] (2.00000, 1.00000, 4.00000) -- (3.00000, 1.00000, 4.00000);
			\draw[edge, mid arrow = red] (2.00000, 3.00000, 4.00000) -- (2.00000, 4.00000, 3.00000);
			\draw[edge, mid arrow = red] (2.00000, 3.00000, 4.00000) -- (3.00000, 2.00000, 4.00000);
			\draw[edge, mid arrow = red] (2.00000, 4.00000, 1.00000) -- (2.00000, 4.00000, 3.00000);
			\draw[edge, mid arrow = red] (2.00000, 4.00000, 1.00000) -- (4.00000, 2.00000, 1.00000);
			\draw[edge, mid arrow = red] (2.00000, 4.00000, 3.00000) -- (4.00000, 2.00000, 3.00000);
			\draw[edge, mid arrow = red] (3.00000, 1.00000, 4.00000) -- (3.00000, 2.00000, 4.00000);
			\draw[edge, mid arrow = red] (3.00000, 1.00000, 4.00000) -- (4.00000, 1.00000, 3.00000);
			\draw[edge, mid arrow = red] (3.00000, 2.00000, 4.00000) -- (4.00000, 2.00000, 3.00000);
			\draw[edge, mid arrow = red] (4.00000, 1.00000, 2.00000) -- (4.00000, 1.00000, 3.00000);
			\draw[edge, mid arrow = red] (4.00000, 1.00000, 2.00000) -- (4.00000, 2.00000, 1.00000);
			\draw[edge, mid arrow = red] (4.00000, 1.00000, 3.00000) -- (4.00000, 2.00000, 3.00000);
			\draw[edge, mid arrow = red] (4.00000, 2.00000, 1.00000) -- (4.00000, 2.00000, 3.00000);
			\node[vertex,label={right:{$(\mspace{-2mu}1,\!2,\!4,\!3\mspace{-2mu})$}}] at (1.00000, 2.00000, 4.00000)     {$6$};
			\node[vertex,label={right:{$(\mspace{-2mu}1,\!3,\!4,\!2\mspace{-2mu})$}}] at (1.00000, 3.00000, 4.00000)     {$10$};
			\node[vertex,label={right:{$(\mspace{-2mu}1,\!4,\!2,\!3\mspace{-2mu})$}}] at (1.00000, 4.00000, 2.00000)     {$12$};
			\node[vertex,label={right:{$(\mspace{-2mu}1,\!4,\!3,\!2\mspace{-2mu})$}}] at (1.00000, 4.00000, 3.00000)     {$13$};
			\node[vertex,label={above:{$(\mspace{-2mu}2,\!1,\!4,\!3\mspace{-2mu})$}}] at (2.00000, 1.00000, 4.00000)     {$16$};
			\node[vertex,label={ [label distance = -3mu]  above :{$(\mspace{-2mu}2,\!3,\!4,\!1\mspace{-2mu})$}}] at (2.00000, 3.00000, 4.00000)     {$24$};
			\node[vertex,label={below:{$(\mspace{-2mu}2,\!4,\!1,\!3\mspace{-2mu})$}}] at (2.00000, 4.00000, 1.00000)     {$25$};
			\node[vertex,label={ left:{$(\mspace{-2mu}2,\!4,\!3,\!1\mspace{-2mu})$}}] at (2.00000, 4.00000, 3.00000)     {$27$};
			\node[vertex,label={above:{$(\mspace{-2mu}3,\!1,\!4,\!2\mspace{-2mu})$}}] at (3.00000, 1.00000, 4.00000)     {$30$};
			\node[vertex,label={below:{$(\mspace{-2mu}3,\!2,\!4,\!1\mspace{-2mu})$}}] at (3.00000, 2.00000, 4.00000)     {$34$};
			\node[vertex,label={left:{$(\mspace{-2mu}4,\!1,\!2,\!3\mspace{-2mu})$}}]at (4.00000, 1.00000, 2.00000)     {$42$};
			\node[vertex,label={above:{$(\mspace{-2mu}4,\!1,\!3,\!2\mspace{-2mu})$}}] at (4.00000, 1.00000, 3.00000)     {$43$};
			\node[vertex,label={left:{$(\mspace{-2mu}4,\!2,\!1,\!3\mspace{-2mu})$}}] at (4.00000, 2.00000, 1.00000)     {$45$};
			\node[vertex,label={ [label distance = -3mu] 0:{$(\mspace{-2mu}4,\!2,\!3,\!1\mspace{-2mu})$}}] at (4.00000, 2.00000, 3.00000)     {$47$};
			
			\end{tikzpicture}%
			\caption{$\QQ_{id, {4231}^{-1}}=\QQ_{id, 4231}$.}
			\label{figure_Bruhat_polytope_4231}
		\end{subfigure}%
		\hspace{1.5em}
		\begin{subfigure}[b]{0.4\textwidth}
			\centering
			\tikzset{  mid arrow/.style={postaction={decorate,decoration={
							markings,
							mark=at position .5 with {\arrow[#1]{stealth}}
			}}}}
			\begin{tikzpicture}%
			[x={(-0.939161cm, 0.244762cm)},
			y={(0.097442cm, -0.482887cm)},
			z={(0.329367cm, 0.840780cm)},
			scale=1.00000,
			back/.style={dashed, thin},
			edge/.style={color=black},
			facet/.style={fill=none},
			vertex/.style={inner sep=0.3pt,circle, draw=black, fill=blue!10,anchor=base, font=\scriptsize},
			vertex2/.style={inner sep=1.5pt,circle,draw=green!25!black,fill=red,thick,anchor=base},
			every label/.append style={text=black, font=\scriptsize}]
			%
			%
			\coordinate (1.00000, 2.00000, 3.00000) at (1.00000, 2.00000, 3.00000);
			\coordinate (1.00000, 2.00000, 4.00000) at (1.00000, 2.00000, 4.00000);
			\coordinate (1.00000, 3.00000, 2.00000) at (1.00000, 3.00000, 2.00000);
			\coordinate (1.00000, 3.00000, 4.00000) at (1.00000, 3.00000, 4.00000);
			\coordinate (1.00000, 4.00000, 2.00000) at (1.00000, 4.00000, 2.00000);
			\coordinate (1.00000, 4.00000, 3.00000) at (1.00000, 4.00000, 3.00000);
			\coordinate (2.00000, 1.00000, 3.00000) at (2.00000, 1.00000, 3.00000);
			\coordinate (2.00000, 1.00000, 4.00000) at (2.00000, 1.00000, 4.00000);
			\coordinate (2.00000, 3.00000, 1.00000) at (2.00000, 3.00000, 1.00000);
			\coordinate (2.00000, 4.00000, 1.00000) at (2.00000, 4.00000, 1.00000);
			\coordinate (3.00000, 1.00000, 2.00000) at (3.00000, 1.00000, 2.00000);
			\coordinate (3.00000, 1.00000, 4.00000) at (3.00000, 1.00000, 4.00000);
			\coordinate (3.00000, 2.00000, 1.00000) at (3.00000, 2.00000, 1.00000);
			\coordinate (3.00000, 4.00000, 1.00000) at (3.00000, 4.00000, 1.00000);
			\draw[edge,back, mid arrow = red] (1.00000, 2.00000, 3.00000) -- (1.00000, 2.00000, 4.00000);
			\draw[edge,back, mid arrow = red] (1.00000, 2.00000, 3.00000) -- (1.00000, 3.00000, 2.00000);
			\draw[edge,back, mid arrow = red] (1.00000, 2.00000, 3.00000) -- (2.00000, 1.00000, 3.00000);
			\draw[edge,back, mid arrow = red] (1.00000, 3.00000, 2.00000) -- (1.00000, 4.00000, 2.00000);
			\draw[edge,back, mid arrow = red] (1.00000, 3.00000, 2.00000) -- (2.00000, 3.00000, 1.00000);
			\draw[edge,back, mid arrow = red] (2.00000, 1.00000, 3.00000) -- (2.00000, 1.00000, 4.00000);
			\draw[edge,back, mid arrow = red] (2.00000, 1.00000, 3.00000) -- (3.00000, 1.00000, 2.00000);
			\draw[edge,back, mid arrow = red] (2.00000, 3.00000, 1.00000) -- (2.00000, 4.00000, 1.00000);
			\draw[edge,back, mid arrow = red] (2.00000, 3.00000, 1.00000) -- (3.00000, 2.00000, 1.00000);
			\node[vertex, label={[label distance = -3mu] above left:{$(\mspace{-2mu}1,\!3,\!2,\!4\mspace{-2mu})$}}] at (1.00000, 3.00000, 2.00000)     {$8$};
			\node[vertex, label={above:{$(\mspace{-2mu}2,\!3,\!1,\!4\mspace{-2mu})$}}] at (2.00000, 3.00000, 1.00000)     {$21$};
			\node[vertex,label={[label distance = -3mu] below left:{$(\mspace{-2mu}1,\!2,\!3,\!4\mspace{-2mu})$}}] at (1.00000, 2.00000, 3.00000)     {$5$};
			\node[vertex,label={below:{$(2,\!1,\!3,\!4\mspace{-2mu})$}}] at (2.00000, 1.00000, 3.00000)     {$15$};
			
			\fill[facet] (3.00000, 4.00000, 1.00000) -- (1.00000, 4.00000, 3.00000) -- (1.00000, 3.00000, 4.00000) -- (3.00000, 1.00000, 4.00000) -- cycle {};
			\fill[facet] (3.00000, 4.00000, 1.00000) -- (1.00000, 4.00000, 3.00000) -- (1.00000, 4.00000, 2.00000) -- (2.00000, 4.00000, 1.00000) -- cycle {};
			\fill[facet] (3.00000, 4.00000, 1.00000) -- (3.00000, 1.00000, 4.00000) -- (3.00000, 1.00000, 2.00000) -- (3.00000, 2.00000, 1.00000) -- cycle {};
			\fill[facet] (3.00000, 1.00000, 4.00000) -- (1.00000, 3.00000, 4.00000) -- (1.00000, 2.00000, 4.00000) -- (2.00000, 1.00000, 4.00000) -- cycle {};
			\draw[edge, mid arrow = red] (1.00000, 2.00000, 4.00000) -- (1.00000, 3.00000, 4.00000);
			\draw[edge, mid arrow = red] (1.00000, 2.00000, 4.00000) -- (2.00000, 1.00000, 4.00000);
			\draw[edge, mid arrow = red] (1.00000, 3.00000, 4.00000) -- (1.00000, 4.00000, 3.00000);
			\draw[edge, mid arrow = red] (1.00000, 3.00000, 4.00000) -- (3.00000, 1.00000, 4.00000);
			\draw[edge, mid arrow = red] (1.00000, 4.00000, 2.00000) -- (1.00000, 4.00000, 3.00000);
			\draw[edge, mid arrow = red] (1.00000, 4.00000, 2.00000) -- (2.00000, 4.00000, 1.00000);
			\draw[edge, mid arrow = red] (1.00000, 4.00000, 3.00000) -- (3.00000, 4.00000, 1.00000);
			\draw[edge, mid arrow = red] (2.00000, 1.00000, 4.00000) -- (3.00000, 1.00000, 4.00000);
			\draw[edge, mid arrow = red] (2.00000, 4.00000, 1.00000) -- (3.00000, 4.00000, 1.00000);
			\draw[edge, mid arrow = red] (3.00000, 1.00000, 2.00000) -- (3.00000, 1.00000, 4.00000);
			\draw[edge, mid arrow = red] (3.00000, 1.00000, 2.00000) -- (3.00000, 2.00000, 1.00000);
			\draw[edge, mid arrow = red] (3.00000, 1.00000, 4.00000) -- (3.00000, 4.00000, 1.00000);
			\draw[edge, mid arrow = red] (3.00000, 2.00000, 1.00000) -- (3.00000, 4.00000, 1.00000);
			\node[vertex,label={left:{$(\mspace{-2mu}3,\!2,\!1,\!4\mspace{-2mu})$}}] at (3.00000, 2.00000, 1.00000)     {$31$};
			\node[vertex,label={left:{$(\mspace{-2mu}3,\!1,\!2,\!4\mspace{-2mu})$}}] at (3.00000, 1.00000, 2.00000)     {$28$};

			\node[vertex,label={above right:{$(\mspace{-2mu}1,\!2,\!4,\!3\mspace{-2mu})$}}] at (1.00000, 2.00000, 4.00000)     {$6$};
			\node[vertex,label={ right:{$(\mspace{-2mu}1,\!3,\!4,\!2\mspace{-2mu})$}}] at (1.00000, 3.00000, 4.00000)     {$10$};
			\node[vertex,label={below:{$(\mspace{-2mu}1,\!4,\!2,\!3\mspace{-2mu})$}}] at (1.00000, 4.00000, 2.00000)     {$12$};
			\node[vertex,label={right:{$(\mspace{-2mu}1,\!4,\!3,\!2\mspace{-2mu})$}}] at (1.00000, 4.00000, 3.00000)     {$13$};
			\node[vertex,label={above:{$(\mspace{-2mu}2,\!1,\!4,\!3\mspace{-2mu})$}}] at (2.00000, 1.00000, 4.00000)     {$16$};
			\node[vertex,label={below:{$(\mspace{-2mu}2,\!4,\!1,\!3\mspace{-2mu})$}}] at (2.00000, 4.00000, 1.00000)     {$25$};
			\node[vertex,label={above:{$(\mspace{-2mu}3,\!1,\!4,\!2\mspace{-2mu})$}}] at (3.00000, 1.00000, 4.00000)     {$30$};
			\node[vertex,label={left:{$(\mspace{-2mu}3,\!4,\!1,\!2\mspace{-2mu})$}}] at (3.00000, 4.00000, 1.00000)     {$39$};
			
			\end{tikzpicture}
			\caption{$\QQ_{id,3412^{-1}}=\QQ_{id, 3412}$.}
			\label{figure_Bruhat_polytope_3412}
		\end{subfigure}
		\caption{Bruhat interval polytopes.}
		\label{figure_BIP}
	\end{figure}
	In Tables~\ref{table_retraction_seq_4231} and~\ref{table_retraction_seq_3412} (on pages~\pageref{table_retraction_seq_4231} and~\pageref{table_retraction_seq_3412}), we present retraction sequences for these Bruhat interval polytopes. Using this observation, we get the Poincar\'e polynomials of these singular generic torus orbit closures:
	\[
	\begin{split}
	\P(Y_{4231}) &= 1 + 7 t^2 + 11t^4 + t^6, \\
	\P(Y_{3412}) &= 1 + 5 t^2 + 7 t^4 + t^6.
	\end{split}
	\]
	Even though retraction sequences we presented in Tables~\ref{table_retraction_seq_4231} and~\ref{table_retraction_seq_3412} are different from those in~\cite[Tables~B.4 and B.5]{le-ma20}, we get the same Poincar\'e polynomials.
	\section{Proof of Theorem~\ref{cor_main}}
	\label{section_proof}
	In this section, we present the proof of Theorem~\ref{cor_main}.
	To give a proof, we  use retraction sequences (see Theorem~\ref{thm_poincare_polynomial_ascending_chains} and Proposition~\ref{prop_affine_paving}). 
	We shall show that for the linear function~$h$ in Lemma~\ref{lemm:2-1}, the following two conditions hold at each vertex $\baru$ of $\QQ_{id, w^{-1}}$:
	\begin{enumerate}
		\item[(C1)] the direction vectors of ascending edges emanating from $\baru$ are linearly independent, and
		\item[(C2)] the ascending edges emanating from $\baru$ produce a desired face of $\QQ_{id, w^{-1}}$. 
	\end{enumerate}
	We will verify these two conditions in Proposition~\ref{lemm:2-3} and Proposition~\ref{prop_form_a_face}, respectively. Combining these two propositions with Lemma~\ref{lemm:smooth-retraction}, the proof of Theorem~\ref{cor_main} follows.
	
	The following lemma is an easy consequence from the definition of $E_w(u)^{+}$, but quite useful.
	
	\begin{lemma}\label{lemm:ewu}
		If $(u(i),u(k))\in E_w(u)^{+}$, then there is no $j$ such that $i<j<k$ and $u(i)<u(j)<u(k)$. Similarly, if $(u(i),u(k))\in E_w(u)^{-}$, then there is no $j$ such that $i<j<k$ and $u(i)>u(j)>u(k)$.
	\end{lemma}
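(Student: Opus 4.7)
The plan is to show that the conclusion is already encoded in the length condition built into the definition of $\widetilde{E}_w(u)$, so the transitive reduction and the Bruhat condition $t_{u(i),u(k)}u\le w$ play no role. Since $E_w(u)\subset\widetilde{E}_w(u)$ by definition of the transitive reduction, it suffices to assume $(u(i),u(k))\in\widetilde{E}_w(u)$, i.e.\ $i<k$ and $|\ell(u)-\ell(t_{u(i),u(k)}u)|=1$, and then analyze what this length condition says combinatorially.

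First I would set $u':=t_{u(i),u(k)}u$, so $u'$ coincides with $u$ outside positions $i,k$ and has $u'(i)=u(k)$, $u'(k)=u(i)$. The main step is an explicit counting of inversions of $u'$ versus $u$. A short case check shows that pairs of positions lying entirely outside $\{i,k\}$, or pairs involving exactly one of $i,k$ together with some index $j<i$ or $j>k$, contribute identically to $\ell(u)$ and $\ell(u')$. Therefore the difference $\ell(u')-\ell(u)$ depends only on the pair $(i,k)$ itself and on the pairs $(i,j)$ and $(j,k)$ with $i<j<k$.

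Next I would evaluate these remaining contributions. The pair $(i,k)$ contributes $+1$ to $\ell(u')-\ell(u)$ when $u(i)<u(k)$ and $-1$ when $u(i)>u(k)$. For each $j$ with $i<j<k$, a direct check of the four inequalities $u(i)\gtrless u(j)$ and $u(j)\gtrless u(k)$ shows that the joint contribution of the pairs $(i,j)$ and $(j,k)$ to $\ell(u')-\ell(u)$ is $0$ unless $u(j)$ lies strictly between $u(i)$ and $u(k)$, in which case it is $+2$ when $u(i)<u(k)$ and $-2$ when $u(i)>u(k)$. Summing gives
\[
\ell(u')-\ell(u)=
\begin{cases}
\phantom{-}1+2\,N_+ & \text{if $u(i)<u(k)$,}\\
-1-2\,N_- & \text{if $u(i)>u(k)$,}
\end{cases}
\]
where $N_+=\#\{j:i<j<k,\ u(i)<u(j)<u(k)\}$ and $N_-=\#\{j:i<j<k,\ u(k)<u(j)<u(i)\}$.

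The final step is immediate: the hypothesis $|\ell(u)-\ell(u')|=1$ forces $N_+=0$ in the first case and $N_-=0$ in the second, which are exactly the two conclusions of the lemma. I do not anticipate a genuine obstacle here; the only thing to be careful about is the case analysis of the inversion contributions from the middle indices $j$, which is elementary but must be done for each sign of $u(k)-u(i)$ separately to recover both statements simultaneously.
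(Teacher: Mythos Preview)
Your proof is correct and follows exactly the route the paper intends: the paper's one-line proof simply says ``It follows from the requirement $|\ell(t_{u(i),u(k)}u)-\ell(u)|=1$,'' and your inversion-counting argument is the standard unpacking of why that length condition is equivalent to the absence of intermediate values $u(j)$. You have supplied the details that the paper leaves implicit as a well-known fact about covers in the Bruhat order on $\mathfrak{S}_n$.
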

	\begin{proof}
		It follows from the requirement $|\ell(t_{u(i),u(j)}u)-\ell(u)|=1$. 
	\end{proof}

	Now we prove Condition (C1).
	
	\begin{lemma} \label{lemm:2-2}
		If $(u(i),u(j)), (u(i),u(k))\in E_w(u)^+$, then $j=k$.  Similarly, if $(u(i),u(k)), (u(j),u(k))\in E_w(u)^+$, then $i=j$.
	\end{lemma}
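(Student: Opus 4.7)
The plan is to derive a contradiction from the transitive-reduction property of $E_w(u)$ by producing an alternate directed path in $G^{e,w}_u$. The strategy rests on the following auxiliary claim, which I would establish separately: \emph{whenever $a<b$ and $u(a)>u(b)$, there exists a directed path from $u(a)$ to $u(b)$ in $G^{e,w}_u$.}

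For the first assertion I would suppose $(u(i),u(j)),(u(i),u(k))\in E_w(u)^+$ with $j\neq k$ and, without loss of generality, $j<k$. Applying Lemma~\ref{lemm:ewu} to $(u(i),u(k))$ with the choice $p=j$ rules out the pattern $u(i)<u(j)<u(k)$; combined with $u(i)<u(j)$ this forces $u(i)<u(k)<u(j)$. The auxiliary claim applied to the pair $j<k$ with $u(j)>u(k)$ then produces a path $u(j)\to\cdots\to u(k)$, which I would concatenate with the edge $u(i)\to u(j)$ to obtain a path $u(i)\to u(j)\to\cdots\to u(k)$ of length at least two. Because $G^{e,w}_u$ is acyclic by~\cite[Theorem~4.19]{ts-wi15}, this path cannot use the direct edge $u(i)\to u(k)$, so that edge is redundant and cannot belong to the transitive reduction $E_w(u)$---a contradiction. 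The second assertion is handled symmetrically: after reducing to $i<j<k$ one derives $u(j)<u(i)<u(k)$, applies the auxiliary claim to $i<j$ with $u(i)>u(j)$, and concatenates with the direct edge $u(j)\to u(k)$.

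The main obstacle will be the auxiliary claim, which I plan to prove by induction on $b-a$. The key simplification is that for $a<b$ with $u(a)>u(b)$, Bruhat-order monotonicity gives $ut_{ab}\le u\le w$ automatically, so membership of $(u(a),u(b))$ in $\widetilde{E}_w(u)$ reduces purely to the cover condition $|\ell(ut_{ab})-\ell(u)|=1$. If this cover condition holds, the direct edge $u(a)\to u(b)$ suffices. Otherwise there exists $m$ with $a<m<b$ and $u(b)<u(m)<u(a)$; then both subintervals $(a,m)$ and $(m,b)$ are strictly shorter and still exhibit a descending value pattern, so the inductive hypothesis furnishes paths $u(a)\to\cdots\to u(m)$ and $u(m)\to\cdots\to u(b)$ whose concatenation completes the argument. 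The delicate point throughout is the preservation of the value-descending orientation at every recursive step, which is precisely what guarantees that the $\le w$ hypothesis stays free.
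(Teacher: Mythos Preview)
Your proof is correct and follows essentially the same route as the paper's: both arguments deduce $u(j)>u(k)$ from Lemma~\ref{lemm:ewu}, then build a directed path $u(i)\to u(j)\to\cdots\to u(k)$ in $G^{e,w}_u$ out of length-decreasing covers (which automatically satisfy the $\le w$ constraint) to contradict membership in the transitive reduction. The only difference is packaging---you isolate the path construction as an inductive auxiliary claim, whereas the paper writes down the descending chain $j=j_1<\cdots<j_p=k$ explicitly---but the content is the same.
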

	
	\begin{proof}
		We will prove only the former statement because the latter statement can be proved similarly.
		Suppose that $j\neq k$. Then we may assume $j<k$ without loss of generality, and we have $i<j<k$. It follows from Lemma~\ref{lemm:ewu} that $u(j)>u(k)$. Then  there is a sequence 
		\[
		j=j_1<j_2<\cdots <j_p=k\quad\text{with}\quad 
		u(j)=u(j_1)>u(j_2)>\cdots >u(j_p)=u(k)
		\]
		such that for each $r=1,2,\dots, p-1$, there is no $j'$ with 
		\[
		j_r<j'<j_{r+1}\quad\text{and}\quad u(j_r)>u(j')>u(j_{r+1}).
		\]
		Then $(u(j_r),u(j_{r+1}))\in \widetilde{E}_w(u)$ for $r=1,2,\dots,p-1$. This together with the identity 
		\begin{equation} \label{eq:2-1}
		e_{u(i)}-e_{u(k)}= e_{u(i)} - e_{u(j)}+\sum_{r=1}^{p-1}(e_{u(j_r)}-e_{u(j_{r+1})})
		\end{equation}
		shows that $(u(i),u(k))$ is decomposable. Consequently $(u(i),u(k))\notin E_w(u)$, but this contradicts the assumption $(u(i),u(k))\in E_w(u)^+$. 
	\end{proof}
	
	The above lemma allows us to give an order on the elements of the set $E_w(u)^+$ as follows:
	\[
	E_w(u)^+=\{ (u(i_r),u(j_r))\mid r=1,2,\dots,a_w(u)\}
	\]
	such that 
	\begin{equation} \label{eq:2-4}
	u(i_1)<u(i_2)<\cdots <u(i_{a_w(u)}),
	\end{equation} where $a_w(u)=|E_w(u)^+|$, see~\eqref{eq_def_of_awu}.

	\begin{proposition} \label{lemm:2-3}
		The direction vectors of the edges corresponding to $E_w(u)^{+}$ are linearly independent.
	\end{proposition}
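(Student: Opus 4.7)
The plan is to translate linear independence into a purely combinatorial statement about the indices, and then exploit the ordering \eqref{eq:2-4} via a triangular argument. Recall from the proof of Lemma~\ref{lemm:2-1} that the direction vector of the edge from $\baru$ to $\overline{t_{u(i_r),u(j_r)}u}$ is a positive scalar multiple of $e_{u(i_r)}-e_{u(j_r)}$. Hence it suffices to prove that the vectors
\[
e_{u(i_r)}-e_{u(j_r)}, \qquad r=1,2,\dots,a_w(u),
\]
are linearly independent in $\R^n$, where the indexing obeys \eqref{eq:2-4}.

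The key observation is that the ordering \eqref{eq:2-4} together with the definition of $E_w(u)^+$ (which forces $u(i_r)<u(j_r)$) implies that $u(i_k)$ is strictly smaller than every other $u(i_r)$ and every $u(j_r)$ with $r\geq k$. Indeed, for $r>k$ we have $u(i_r)>u(i_k)$ by \eqref{eq:2-4}, and for $r\geq k$ we have $u(j_r)>u(i_r)\geq u(i_k)$. Consequently the standard basis vector $e_{u(i_k)}$ does not appear in any vector $e_{u(i_r)}-e_{u(j_r)}$ with $r>k$.

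To finish, suppose $\sum_{r=1}^{a_w(u)}c_r\bigl(e_{u(i_r)}-e_{u(j_r)}\bigr)=0$. I would extract the coefficient of $e_{u(i_1)}$: by the observation above, only the $r=1$ term contributes, and its contribution is exactly $c_1$, so $c_1=0$. Iterating this argument, once $c_1=\dots=c_{k-1}=0$ the surviving sum $\sum_{r\geq k}c_r\bigl(e_{u(i_r)}-e_{u(j_r)}\bigr)=0$ has $e_{u(i_k)}$-coefficient equal to $c_k$, forcing $c_k=0$. This proves the proposition by induction on $k$.

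No step looks genuinely hard; the only point that requires care is the verification that $u(j_r)\neq u(i_k)$ for $r\geq k$, which is where both the sign condition $u(i_r)<u(j_r)$ (defining $E_w(u)^+$) and the ordering \eqref{eq:2-4} are jointly used. I do not expect to need Lemma~\ref{lemm:2-2} here — that lemma is what makes the ordering \eqref{eq:2-4} well-defined, and once we have that ordering the rest is a short triangular argument.
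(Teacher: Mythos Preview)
Your proof is correct and follows essentially the same approach as the paper's own argument: both set up the linear dependence relation $\sum_r c_r(e_{u(i_r)}-e_{u(j_r)})=0$ and successively extract the coefficient of $e_{u(i_k)}$ using the ordering \eqref{eq:2-4} together with $u(i_r)<u(j_r)$, concluding $c_k=0$ for each $k$. Your write-up is slightly more explicit about the inductive mechanism and about why $u(j_r)\neq u(i_k)$ for $r\ge k$, but the content is the same triangular argument.
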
 
	
	\begin{proof}
		Recall that the primitive direction vector of the edge corresponding to $(u(i_r),u(j_r))\in E_w(u)^+$ is $e_{u(i_r)}-e_{u(j_r)}$. 
		Suppose that $\sum_{r=1}^{a_w(u)} c_r(e_{u(i_r)}-e_{u(j_r)})=0$ with $c_r\in \R$. Then the coefficient of $e_{u(i_1)}$ at the left hand side is $c_1$ by \eqref{eq:2-4} since $u(i_r)<u(j_r)$ for each $r$. Therefore $c_1$ must be zero. Then the coefficient of $e_{u(i_2)}$ is $c_2$ and this must be zero. Repeating this argument, we see that all $c_r$'s are zero. Hence the set of primitive direction vectors $\{e_{u(i_r)}-e_{u(j_r)} \mid r=1,2,\dots,a_w(u)\}$ is linearly independent.
	\end{proof}

	For the remaining part of this section, we will concentrate on showing Condition~(C2).
	It is enough to find a function $F\colon \QQ_{id,w^{-1}}\subset \R^n\to \R$ satisfying~\eqref{eq_con_of_F}, see Remark~\ref{rmk_existence_of_F}. Such a function $F$ can be obtained by the function $f\colon [n]\to \R$ in the next lemma.
	
	\begin{lemma}\label{lemma_existence_of_f}
		For each vertex $\baru$ of $\QQ_{id, w^{-1}}$, there exists a function $f\colon [n]\to \R$ such that 
		\begin{equation} \label{eq:f}
		\begin{cases}
		f(p)=f(q) \quad&\text{if $(p,q)\in E_w(u)^+$},\\
		f(p)>f(q) \quad&\text{if $(p,q)\in E_w(u)^-$}.\end{cases}
		\end{equation} 
	\end{lemma}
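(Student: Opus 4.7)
The plan is to reduce the existence of $f$ to a combinatorial DAG argument on the equivalence classes induced by $E_w(u)^+$. By Lemma~\ref{lemm:2-2} (and its symmetric statement on the second coordinate), each $k \in [n]$ is the first (and second) component of at most one edge of $E_w(u)^+$. Viewing $E_w(u)^+$ as a directed graph on $[n]$ with edges pointing from the smaller to the larger value, its connected components are therefore disjoint ascending chains $a_1 < a_2 < \cdots < a_s$, and within each chain the positions $u^{-1}(a_1) < u^{-1}(a_2) < \cdots < u^{-1}(a_s)$ are also strictly increasing. These chains are precisely the equivalence classes under the relation $\sim$ generated by $E_w(u)^+$.

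Now let $\bar G$ be the directed graph on $[n]/\!\!\sim$ with an edge $[p] \to [q]$ whenever $(p',q') \in E_w(u)^-$ for some $p' \in [p]$ and $q' \in [q]$. If $\bar G$ is a directed acyclic graph, then a topological ordering of the classes combined with assigning strictly decreasing real values yields an $f$ satisfying the two conditions automatically. A convenient explicit candidate is $f(k) := -u^{-1}(\max [k]_\sim)$, in which case the first condition is immediate and the second reduces to the positional inequality: for every $(p,q) \in E_w(u)^-$, one has $u^{-1}(\max [p]_\sim) < u^{-1}(\max [q]_\sim)$.

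To prove this positional inequality, write the $E_w(u)^+$-chain in $[p]_\sim$ from $p$ to $p^\ast := \max[p]_\sim$ as $p = p_0 < p_1 < \cdots < p_t = p^\ast$, and set $q^\ast := \max[q]_\sim$. I would argue by induction on $i$ that $u^{-1}(p_i) < u^{-1}(q^\ast)$. The base case $u^{-1}(p_0) = u^{-1}(p) < u^{-1}(q) \le u^{-1}(q^\ast)$ is immediate from $(p,q) \in E_w(u)^-$ and chain monotonicity in $[q]_\sim$. For the inductive step, applying Lemma~\ref{lemm:ewu} to the $E_w(u)^+$-edge $(p_i, p_{i+1})$ settles the case $p_i < q^\ast < p_{i+1}$: the lemma forbids $q^\ast$ from having position in $(u^{-1}(p_i), u^{-1}(p_{i+1}))$, which combined with the inductive hypothesis $u^{-1}(p_i) < u^{-1}(q^\ast)$ forces $u^{-1}(q^\ast) > u^{-1}(p_{i+1})$.

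The main obstacle is the remaining value-cases $q^\ast < p_i$ or $q^\ast > p_{i+1}$, where Lemma~\ref{lemm:ewu} does not apply directly. Here the key is the indecomposability of $E_w(u)$: a putative positional violation $u^{-1}(p_i) < u^{-1}(q^\ast) < u^{-1}(p_{i+1})$ would, together with the values and positions of $p_i$, $q^\ast$, and $p_{i+1}$, force Bruhat covers in $\widetilde{E}_w(u)$ joining $p_i$ to $q^\ast$ and $q^\ast$ to $p_{i+1}$, producing an alternative directed path in the graph $G^{e,w}_u$ and contradicting that $(p_i,p_{i+1}) \in E_w(u)$ lies in the transitive reduction. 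Handling these configurations requires careful enumeration and is the technical heart of the proof; once completed, the induction closes, $\bar G$ is a DAG, and $f(k) = -u^{-1}(\max [k]_\sim)$ satisfies both conditions of the lemma.
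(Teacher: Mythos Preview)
Your high-level plan---partition $[n]$ into the $E_w(u)^+$-chains and then order the quotient---is exactly the paper's strategy. The paper executes it by proving a non-crossing lemma (for any two maximal ascents $I,I'$, at most one of $(I,I')$ and $(I',I)$ is nonempty), from which a consistent order on the chains is obtained and extended to a total order; any order-preserving $f$ then works. So your $\bar G$-is-a-DAG route is viable, but the acyclicity is the whole content, and the paper's non-crossing lemma is what supplies it.

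Your explicit candidate $f(k)=-u^{-1}(\max[k]_\sim)$, however, is \emph{wrong}, and the failure sits precisely in the case you flag as the ``technical heart''. Take $n=3$, $w=312$, $u=213$. One has $\widetilde E_w(u)=\{(2,1),(2,3)\}$ (the pair $(1,3)$ is excluded since $t_{1,3}u=231\not\le 312$), hence $E_w(u)^+=\{(2,3)\}$ and $E_w(u)^-=\{(2,1)\}$, with classes $\{2,3\}$ and $\{1\}$. For $(p,q)=(2,1)\in E_w(u)^-$ this gives $p^\ast=3$ and $q^\ast=1$, so $u^{-1}(p^\ast)=3>2=u^{-1}(q^\ast)$: your positional inequality fails, and your $f$ yields $f(2)=f(3)=-3<-2=f(1)$, violating the second condition. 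This lands in your case $q^\ast<p_i$ (with $i=0$, $p_0=2$, $p_1=3$), and your proposed rescue via indecomposability cannot work: the ascending cover $(q^\ast,p_{i+1})=(1,3)$ is absent from $\widetilde E_w(u)$ precisely because $t_{1,3}u\not\le w$. Descending covers always stay in $[id,w]$, but ascending ones need not, so you cannot in general manufacture the alternative directed path you describe. The paper's non-crossing argument avoids ever needing to produce such ascending covers.
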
	
	We give the proof of the lemma above at the end of this section.
	\begin{proposition}\label{prop_form_a_face}
		For each vertex $\baru$ of the Bruhat interval polytope $\QQ_{id, w^{-1}}$, there is a face of  $\QQ_{id, w^{-1}}$ whose edges meeting at $\baru$ are the ascending edges emanating from~$\baru$. 	
	\end{proposition}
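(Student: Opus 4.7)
The plan is to invoke Remark~\ref{rmk_existence_of_F}, which reduces the task to constructing a function $F\colon \R^n\to\R$ satisfying condition \eqref{eq_con_of_F} at the vertex $\baru$. Taking the function $f\colon[n]\to\R$ guaranteed by Lemma~\ref{lemma_existence_of_f} (stated just above and which I assume), I would define the linear functional
\[
F(y_1,\ldots,y_n) \colonequals -\sum_{p=1}^{n} f(p)\, y_p,
\]
and propose $Q(\baru) \colonequals \QQ_{id,w^{-1}}\cap(\baru+\ker F)$ as the required face.

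Next I would verify \eqref{eq_con_of_F}. Consider any edge of $\QQ_{id,w^{-1}}$ at $\baru$ corresponding to $(u(i),u(j))\in E_w(u)$ with $i<j$, whose other endpoint is $\barv$ with $v=t_{u(i),u(j)}u$. From the coordinate computation carried out in the proof of Lemma~\ref{lemm:2-1}, one has
\[
\barv-\baru = (j-i)(e_{u(i)}-e_{u(j)}), \quad\text{so}\quad F(\baru-\barv) = (j-i)\bigl(f(u(i))-f(u(j))\bigr).
\]
By Lemma~\ref{lemm:2-1}, the edge is ascending (i.e.\ $\baru\to\barv$) exactly when $(u(i),u(j))\in E_w(u)^+$, and in that case \eqref{eq:f} gives $f(u(i))=f(u(j))$, so $F(\baru-\barv)=0$. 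Otherwise $(u(i),u(j))\in E_w(u)^-$, so $u(i)>u(j)$, and \eqref{eq:f} forces $f(u(i))>f(u(j))$, which together with $j-i>0$ yields $F(\baru-\barv)>0$. Thus $F$ satisfies \eqref{eq_con_of_F}, and Remark~\ref{rmk_existence_of_F} produces the desired face $Q(\baru)$ whose edges at $\baru$ are precisely the ascending ones.

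With Lemma~\ref{lemma_existence_of_f} in hand this reduction is essentially formal, so the real content is concentrated in that lemma; I would expect the technical heart of the overall argument to live there rather than in Proposition~\ref{prop_form_a_face} itself. The combinatorial task—simultaneously realizing the equality constraints from $E_w(u)^+$ and the strict-inequality constraints from $E_w(u)^-$ by a single function on $[n]$—is the main obstacle. A natural attack is to quotient $[n]$ by the equivalence relation generated by the pairs in $E_w(u)^+$, show (using the indecomposability reasoning of Lemma~\ref{lemm:ewu} together with the length condition in the definition of $E_w(u)$) that the strict inequalities from $E_w(u)^-$ descend to a well-defined acyclic relation on the quotient, and then define $f$ by any linear extension; verifying the acyclicity is where I expect the genuine difficulty to lie.
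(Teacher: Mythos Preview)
Your proof is correct and follows essentially the same route as the paper: take the function $f$ from Lemma~\ref{lemma_existence_of_f}, build a linear functional $F$ from it, verify the two cases of~\eqref{eq_con_of_F} using the edge-direction computation from Lemma~\ref{lemm:2-1}, and conclude via Remark~\ref{rmk_existence_of_F}. The only cosmetic difference is a sign: the paper takes $F$ to be the inner product with $(f(1),\dots,f(n))$ (so $\baru$ minimizes $F$ on the polytope), whereas you take its negative (so $\baru$ maximizes $F$); either choice cuts out the same face $\baru+\ker F$. Your closing remarks about where the real work lies and how to attack Lemma~\ref{lemma_existence_of_f} (pass to the quotient by the equivalence generated by $E_w(u)^+$ and verify acyclicity of the induced relation) are exactly right and match the paper's strategy via Lemmas~\ref{lemm:3-1} and~\ref{lemm:3-2}.
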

	
	\begin{proof}
		We define a linear function $F\colon \R^n\to \R$ by the inner product with the vector $(f(1),\dots,f(n))$.  Then 
		\[
		\begin{cases}
		F(e_p-e_q)=0 \quad&\text{if $(p,q)\in E_w(u)^+$},\\
		F(e_p-e_q)>0 \quad&\text{if $(p,q)\in E_w(u)^-$}.
		\end{cases}
		\]
		Therefore, the intersection of $\QQ_{id,w^{-1}}$ with the hyperplane $\baru+\ker F$ is the face of~$\QQ_{id,w^{-1}}$ containing the vertex $\baru$ whose edges meeting at $\baru$ are exactly the ascending edges emanating from $\baru$.  
	\end{proof}
	
	To prove Lemma~\ref{lemma_existence_of_f}, we need a series of lemmas. 
	We begin with the following observation. 
	
	\begin{lemma} \label{lemm:3-1}
		Let $(u(i),u(j)), (u(i'),u(j'))\in E_w(u)^+$. 
		Suppose that $i<i'<j$ and $u(i)>u(i')$. Then we have
		\begin{equation*}
		\begin{cases}
		u(i)>u(j') &\text{ if }j'<j,\text{ and}\\
		u(j)>u(j') &\text{ if }j<j'.
		\end{cases}
		\end{equation*}
	\end{lemma}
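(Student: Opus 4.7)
The plan is to prove both cases by contradiction, using Lemma~\ref{lemm:ewu} as the central tool.

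\emph{Case $j<j'$.} The positions satisfy $i<i'<j<j'$. Suppose for contradiction that $u(j)<u(j')$. Combining this with the hypotheses $u(i')<u(i)$ and the ascending condition $u(i)<u(j)$ (which comes from $(u(i),u(j))\in E_w(u)^+$) yields the chain $u(i')<u(j)<u(j')$. Since $j\in(i',j')$, Lemma~\ref{lemm:ewu} applied to the edge $(u(i'),u(j'))\in E_w(u)^+$ is directly violated, proving $u(j)>u(j')$.

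\emph{Case $j'<j$.} The positions satisfy $i<i'<j'<j$. Suppose for contradiction that $u(i)<u(j')$. Applying Lemma~\ref{lemm:ewu} to $(u(i),u(j))$ at the witness $j'\in(i,j)$ rules out $u(i)<u(j')<u(j)$, forcing $u(j')>u(j)$. Thus we arrive at the ``2143-pattern'' configuration $u(i')<u(i)<u(j)<u(j')$ at positions $i<i'<j'<j$.

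The main obstacle is refuting this 2143-pattern configuration, since neither $i$ nor $j$ lies in $(i',j')$ and so Lemma~\ref{lemm:ewu} for $(u(i'),u(j'))$ cannot be applied directly to a witness drawn from the given data. The plan is to derive a contradiction from the indecomposability of $(u(i),u(j))$ in the acyclic digraph $G^{e,w}_u$, by constructing a directed path $u(i)\to u(i')\to u(j')\to u(j)$ inside $\widetilde{E}_w(u)$. The three edges of this path are the descending pair $(u(i),u(i'))$ (available since $i<i'$ and $u(i)>u(i')$), the given ascending edge $(u(i'),u(j'))$, and the descending pair $(u(j'),u(j))$ (available since $j'<j$ and $u(j')>u(j)$). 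The Bruhat covering condition $t \le w$ is automatic for the two descending edges because they shorten $u$; the length-one conditions reduce, via Lemma~\ref{lemm:ewu} applied to $(u(i),u(j))$, to showing that no position of $(i,i')\cup(j',j)$ has value in $(u(i'),u(i))\cup(u(j),u(j'))$. Whenever such an obstructing position $c$ occurs, we replace the corresponding index of the 2143-pattern with $c$ and iterate; since the range of positions strictly shrinks at each step, the process terminates, producing a valid path of length three (or longer) from $u(i)$ to $u(j)$ in $G^{e,w}_u$, which decomposes $(u(i),u(j))$ and contradicts $(u(i),u(j))\in E_w(u)$.
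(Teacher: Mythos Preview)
Your proof is correct and follows essentially the same route as the paper's. In both arguments the case $j<j'$ is dispatched directly by Lemma~\ref{lemm:ewu}, and the case $j'<j$ is reduced (via Lemma~\ref{lemm:ewu}) to the configuration $u(i')<u(i)<u(j)<u(j')$ at positions $i<i'<j'<j$, which is then ruled out by exhibiting a path $u(i)\to\cdots\to u(i')\to u(j')\to\cdots\to u(j)$ in $\widetilde{E}_w(u)$ that decomposes $(u(i),u(j))$.

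Two minor remarks on presentation. First, your phrase ``via Lemma~\ref{lemm:ewu} applied to $(u(i),u(j))$'' in the reduction of the length-one conditions is spurious: Lemma~\ref{lemm:ewu} constrains values in the open interval $(u(i),u(j))$, not in $(u(i'),u(i))$ or $(u(j),u(j'))$, so it does not help here. Second, your iterative scheme (``replace the corresponding index of the 2143-pattern'') is really just the descending-staircase construction already carried out in the proof of Lemma~\ref{lemm:2-2}: given $a<b$ with $u(a)>u(b)$, one refines to a chain $a=a_0<a_1<\cdots<a_p=b$ with $u(a_0)>\cdots>u(a_p)$ and each $(u(a_r),u(a_{r+1}))\in\widetilde{E}_w(u)$. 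The paper simply invokes this as ``descending paths from $u(i)$ to $u(i')$ and from $u(j')$ to $u(j)$'' rather than re-deriving it, which makes the argument considerably shorter. Framing the recursion as acting on the four-tuple $(i,i',j',j)$ is awkward because replacing $i'$ (or $j'$) loses the middle edge $(u(i'),u(j'))\in E_w(u)^+$; it is cleaner to build the two descending paths independently and concatenate.
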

	We visualize the above lemma in Figure~\ref{fig:ascending}.
	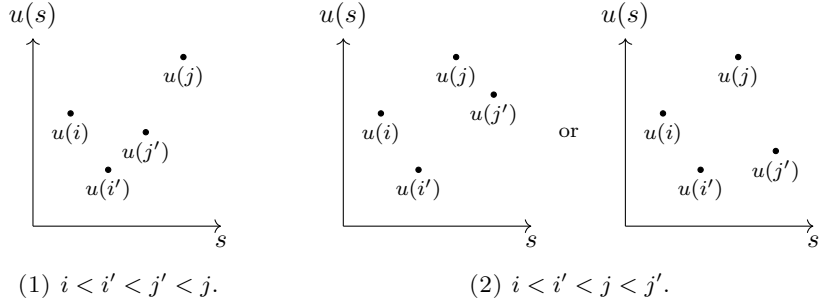
\begin{figure}[ht]
		\begin{subfigure}[c]{0.33\textwidth}
			\centering
			\begin{tikzpicture}[scale=.5]
			\draw[->] (0,0)--(5,0);
			\draw[->] (0,0)--(0,5);
			\draw (0,5) node[above] {$u(s)$};
			\draw (5,0) node[below] {$s$};
			\filldraw (1,3) circle(2pt) node[below]  {\footnotesize$u(i)$};
			\filldraw (2,1.5) circle(2pt) node [below] {\footnotesize$u(i')$};
			\filldraw (3,2.5) circle(2pt) node  [below] {\footnotesize$u(j')$};
			\filldraw (4,4.5) circle(2pt) node  [below] {\footnotesize$u(j)$};
			\end{tikzpicture}
			\subcaption{$i<i'<j'<j$.}
		\end{subfigure}
		\begin{subfigure}[c]{0.6\textwidth}
			\centering
			\begin{tikzpicture}[scale=.5]
			\draw[->] (0,0)--(5,0);
			\draw[->] (0,0)--(0,5);
			\draw (0,5) node[above] {$u(s)$};
			\draw (5,0) node[below] {$s$};
			\filldraw (1,3) circle(2pt) node[below]  {\footnotesize$u(i)$};
			\filldraw (2,1.5) circle(2pt) node [below] {\footnotesize$u(i')$};
			\filldraw (4,3.5) circle(2pt) node  [below] {\footnotesize$u(j')$};
			\filldraw (3,4.5) circle(2pt) node  [below] {\footnotesize$u(j)$};
			\draw (6,2.5) node{\footnotesize{or}};
			\draw[->] (0+7.5,0)--(5+7.5,0);
			\draw[->] (0+7.5,0)--(0+7.5,5);
			\draw (0+7.5,5) node[above] {$u(s)$};
			\draw (5+7.5,0) node[below] {$s$};
			\filldraw (1+7.5,3) circle(2pt) node[below]  {\footnotesize$u(i)$};
			\filldraw (2+7.5,1.5) circle(2pt) node [below] {\footnotesize$u(i')$};
			\filldraw (4+7.5,2) circle(2pt) node  [below] {\footnotesize$u(j')$};
			\filldraw (3+7.5,4.5) circle(2pt) node  [below] {\footnotesize$u(j)$};
			\end{tikzpicture}
			\subcaption{$i<i'<j<j'$.}
		\end{subfigure}
		\caption{Visualization of Lemma~\ref{lemm:3-1}.}
		\label{fig:ascending}
	\end{figure}
	\begin{proof}[Proof of Lemma~\ref{lemm:3-1}]
		From the hypotheses, we have $u(i')<u(i)<u(j)$.
		First assume $j'<j$. Then $i<i'<j'<j$. Since 
		$(u(i),u(j))\in E_w(u)^+$, we have $u(i)>u(j')$ or $u(j')>u(j)$  by Lemma~\ref{lemm:ewu}. However, the latter does not occur. Indeed, if $u(j')>u(j)$, then we get  descending paths from $u(j')$ to $u(j)$ and from $u(i)$ to $u(i')$ since $i<i'$ and $u(i)>u(i')$. Hence $(u(i),u(j))$ becomes a decomposable element which is a contradiction. Thus, $u(i)>u(j')$ if $j'<j$.
		
		Now assume $j<j'$. Then $i'<j<j'$. Since $(u(i'),u(j'))\in E_w(u)^+$, it follows from Lemma~\ref{lemm:ewu} that $u(j)>u(j')$. 
	\end{proof}
	
	To each set $E_w(u)^+$, we   associate a graph $\Gamma_w(u)^+$ as follows:
	\begin{enumerate}
		\item the vertices of $\Gamma_w(u)^+$ are labeled with $i$ and placed at the position $(i,u(i))$ for $i=1,\dots,n$, and
		\item the vertices $i$ and $j$ are joined by an edge if and only if $(u(i),u(j))\in E_w(u)^+$.
	\end{enumerate}
	Lemma~\ref{lemm:2-2} says that every connected component of the graph is a path (or a point). 
	For an ordered set $I=(i_1,\dots,i_r)$ with $1\le i_1<\dots<i_r\le n$, we call $I$ an \emph{ascent} in $E_w(u)^+$ if $(u(i_k),u(i_{k+1}))\in E_w(u)^+$ for every $k=1,\dots,r-1$. If an ascent $I$ corresponds to a connected component of the graph $\Gamma_w(u)^+$, then we call it \emph{maximal}. 
	For two maximal ascents $I$ and $I'$ in $E_w(u)^+$, we define a set $(I,I')$ of ordered pairs by 
	\[
	(I,I')\coloneqq\{(i,i')\mid i\in I,\ i'\in I',\ i<i',\ u(i)>u(i')\}.
	\]
	
	Note that both $(I,I')$ and $(I',I)$ may be empty. If both $(I,I')$ and $(I',I)$ are non-empty, then two paths corresponding to $I$ and $I'$ are crossing. In the following, we show that there is no crossing in the graph $\Gamma_w(u)^+$. For example, in Figure~\ref{fig:crossing}, two ascents $I=(i_1,i_2,i_3,i_4)$ and $I'=(i_1',i_2',i_3',i_4')$ are crossing, $(I,I')=\{(i_1,i_1')\}$ and $(I',I)=\{(i_2',i_3),(i_3',i_4)\}$. However, this does not occur because if $(i_1,i_2),(i_1',i_2')\in E_w(u)^+$ and $i_1<i_1'<i_2<i_2'$, then $u(i_2')<u(i_2)$  by Lemma~\ref{lemm:3-1}.
	
	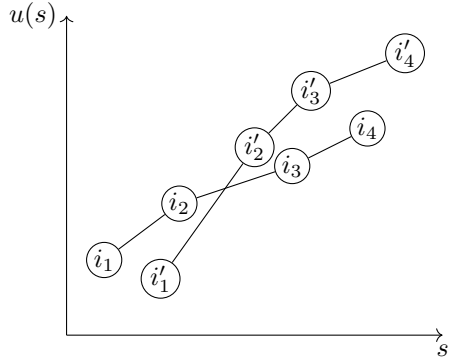
\begin{figure}
		\centering
		\begin{tikzpicture}[scale=.5,node/.style={circle,draw, fill=white!20, inner sep = 0.25mm}]
		\draw[->] (0,0.5)--(10,0.5);
		\draw[->] (0,0.5)--(0,9);
		\draw (0,9) node[left] {$u(s)$};
		\draw (10,0.5) node[below] {$s$};
		\node[node] (1) at (1,2.5) {$i_1$};
		\node[node] (2) at (3,4) {$i_2$};
		\node[node] (3) at (6,5) {$i_3$};
		\node[node] (4) at (8,6) {$i_4$};
		
		\draw (1) to  (2);
		\draw (2) to  (3);
		\draw (3) to  (4);
		
		\node[node] (6) at (2.5,2) {$i_1'$};
		\node[node] (7) at (5,5.5) {$i_2'$};
		\node[node] (8) at (6.5,7) {$i_3'$};
		\node[node] (9) at (9,8) {$i_4'$};
		
		\draw (6) to  (7);
		\draw (7) to  (8);
		\draw (8) to  (9);

		\end{tikzpicture}
		\caption{Crossing ascents.}\label{fig:crossing}
	\end{figure}

	\begin{lemma} \label{lemm:3-2}
		Either $(I,I')$ or $(I',I)$ is empty.
	\end{lemma}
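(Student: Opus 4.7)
The plan is to argue by contradiction. Suppose both $(I, I')$ and $(I', I)$ are nonempty, and pick witnesses $(a, a') \in (I, I')$ and $(b', b) \in (I', I)$, so that $a, b \in I$, $a', b' \in I'$, with $a < a'$, $u(a) > u(a')$, $b' < b$, and $u(b') > u(b)$. I would first dispose of the degenerate cases $a = b$ and $a' = b'$: in each, combining the given strict inequalities with the fact that $u$ is strictly increasing along $I$ and along $I'$ (immediate from the definition of $E_w(u)^+$) yields a direct contradiction.

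Next, I would compare positions. Of the four potential sign patterns for $(\mathrm{sign}(a-b), \mathrm{sign}(a'-b'))$, the two mixed patterns ($a<b, a'>b'$ and $a>b, a'<b'$) lead to immediate sign contradictions via the monotonicity of $u$ on $I$ and $I'$. The remaining two are symmetric under swapping the labels $I \leftrightarrow I'$, which interchanges the witnesses and interchanges the sets $(I, I')$ and $(I', I)$. So without loss of generality I would reduce to $a < b$ and $a' < b'$; combining this with $a < a'$ and $b' < b$ yields the positional ordering $a < a' < b' < b$.

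Since $a$ and $b$ lie in the same maximal ascent $I$, they are joined by a path of $E_w(u)^+$-edges in $\Gamma_w(u)^+$, which I write as $a = v_0 < v_1 < \cdots < v_m = b$; similarly $a' = w_0 < w_1 < \cdots < w_k = b'$ in $I'$. Every $w_l$ lies in the position interval $(a, b)$, and since $I$ and $I'$ share no positions there is a unique index $q_l$ with $v_{q_l} < w_l < v_{q_l+1}$. The core of the proof is an induction on $l$ showing $u(v_{q_l}) > u(w_l)$ for every $l = 0, \dots, k$. The base case follows from $u(v_{q_0}) \ge u(v_0) = u(a) > u(a') = u(w_0)$. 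For the inductive step I would apply Lemma~\ref{lemm:3-1} to the edges $(u(v_{q_l}), u(v_{q_l+1}))$ and $(u(w_l), u(w_{l+1}))$ in $E_w(u)^+$: the hypotheses $v_{q_l} < w_l < v_{q_l+1}$ and $u(v_{q_l}) > u(w_l)$ hold, and the two alternatives in the conclusion correspond precisely to whether $w_{l+1} < v_{q_l+1}$ (so $q_{l+1} = q_l$ and directly $u(v_{q_{l+1}}) > u(w_{l+1})$) or $v_{q_l+1} < w_{l+1}$ (so $q_{l+1} \ge q_l+1$ and $u(v_{q_l+1}) > u(w_{l+1})$, which monotonicity of $u$ along $I$ upgrades to $u(v_{q_{l+1}}) > u(w_{l+1})$).

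Setting $l = k$ gives $u(v_{q_k}) > u(w_k) = u(b')$; since $v_{q_k} \le b' < b$ and $u$ is increasing on $I$, this yields $u(b) \ge u(v_{q_k}) > u(b')$, contradicting $u(b') > u(b)$. The main obstacle is setting up the inductive invariant so that both outcomes of Lemma~\ref{lemm:3-1} feed back into the same statement; the subtlety is the second outcome, where $w_{l+1}$ may leapfrog several $I$-vertices in a single step, and one must invoke monotonicity of $u$ along $I$ to carry the inequality forward from $v_{q_l+1}$ to $v_{q_{l+1}}$.
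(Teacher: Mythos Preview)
Your proof is correct and reaches the same contradiction as the paper, but by a genuinely different route. Both arguments first reduce (via symmetry and the monotonicity of $u$ along $I$ and $I'$) to the configuration $a<a'<b'<b$ with $u(a')<u(a)<u(b)<u(b')$. From there the paper argues extremally: it forms the set of all such quadruples, picks a \emph{minimal} one $(i,i',j',j)$, observes that the edge $(u(i),u(j))$ must be decomposable (via the paths $i\!\searrow\! i'$, $i'\!\nearrow\! j'$ in $I'$, $j'\!\searrow\! j$), and hence finds an intermediate $k\in I$ with $(u(i),u(k))\in E_w(u)^+$; a case analysis on the position of $k$ relative to $i',j'$, together with Lemma~\ref{lemm:3-1}, then produces a strictly smaller quadruple, contradicting minimality.

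Your approach is instead a direct forward induction along the $I'$-path from $a'$ to $b'$, maintaining the invariant that the nearest $I$-vertex to the left in position dominates the current $I'$-vertex in $u$-value; Lemma~\ref{lemm:3-1} serves as the propagation step. This avoids the minimal-quadruple machinery and the decomposability discussion entirely, and makes the role of Lemma~\ref{lemm:3-1} more transparent. The paper's extremal argument, on the other hand, is closer in spirit to how one typically handles ``no crossing'' statements and would generalize more readily if one wanted to show something stronger about the poset structure of the maximal ascents. Both proofs ultimately rest on the same key lemma.
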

	
	\begin{proof}
		Suppose that both sets are non-empty. We will deduce a contradiction. If $(i,i')\in (I,I')$ and $(j',j)\in (I',I)$, then $i<j$ or $j < i$. Furthermore, if $i<j$, then $i'<j'$ since $i<i'$, $j'<j$ and $u(i')<u(i)<u(j)<u(j')$. Similarly, if $j < i$, then $j' < i'$.
		Therefore, one of the following two sets are non-empty: 
		\[
		\begin{split}
		&\{ (i,i',j',j)\mid (i,i')\in (I,I'),\ (j',j)\in (I',I),\ i<j\} \quad\text{and}\\
		&\{ (j',j,i,i')\mid (j',j)\in (I',I),\ (i,i')\in (I,I'),\ j'<i'\}.
		\end{split}
		\]
		Since the roles of $I$ and $I'$ are the same, without loss of generality we assume that the former set above is non-empty. That is,
		\[
		(\!(I,I')\!)\coloneqq\{ (i,i',j',j)\mid (i,i')\in (I,I'),\ (j',j)\in (I',I),\ i<j\} \neq\emptyset.
		\]
		Note that $(i,i',j',j)\in (\!(I,I')\!)$ if and only if $i<i'<j'<j$ and $u(i')<u(i)<u(j)<u(j')$. 
		
		Since $(\!(I,I')\!)$ is a finite set, we can take a \emph{minimal} element $(i,i',j',j)\in (\!(I,I')\!)$ in the sense that if $(k,k',\ell',\ell)\in (\!(I,I')\!)$ satisfies $i\le k, i'\le k', \ell'\le j', \ell\le j$, then $(k,k',\ell',\ell)=(i,i',j',j)$. Since $i<i'$ and $u(i)>u(i')$, there is a descending path from the vertex $i$  to the vertex $i'$. Similarly, since $j'<j$ and $u(j')>u(j)$, there is a descending path from the vertex $j'$ to the vertex $j$. Moreover, there is an ascending path from the vertex $i'$ to the vertex $j'$ since $i',j'\in I'$ with $i'<j'$. These three paths connect the vertex $i$ and the vertex $j$, so $(u(i),u(j))\notin E_w(u)^+$. Therefore, there is $k\in I$ with $i<k<j$ such that $(u(i),u(k))\in E_w(u)^+$. Then $u(i)<u(k)<u(j)$. 
		
		If $i<k<i'$, then $k<i'<j'<j$ and $u(i')<u(i)<u(k)<u(j)<u(j')$. It means that $(k,i',j',j)\in (\!(I,I')\!)$, but this contradicts the minimality of $(i,i',j',j)$. Similarly, we can exclude the case $j'<k<j$.
		
		In the sequel we may assume $i'<k<j'$. Then $u(i')<u(k)<u(j')$ since $u(i')<u(i)<u(k)<u(j)<u(j')$). Hence $(u (i'), u (j')) \not\in E_w(u)^+$ by Lemma~\ref{lemm:ewu}. However, since $i',j' \in I'$, there must exist $k' \in I'$ such that $i' < k' < j'$ and $u(i') < u(k') < u(j')$.
		
		We may further assume $(u(i'),u(k'))\in E_w(u)^+$. If $k'<k$, then $i<i'<k'<k$. Since both $(u(i),u(k))$ and $(u(i'),u(k'))$ are in $E_w(u)^+$, we have $u(k')<u(i)$ by Lemma~\ref{lemm:3-1}. Therefore $(i,k',j',j)\in (\!(I,I')\!)$ but this contradicts the minimality of $(i,i',j',j)$. If $k<k'$, then $i<i'<k<k'$. Since both $(u(i),u(k))$ and $(u(i'),u(k'))$ are in $E_w(u)^+$, we have $u(k')<u(k)$ by Lemma~\ref{lemm:3-1}. Therefore $(k,k',j',j)\in (\!(I,I')\!)$ but this also contradicts the minimality of $(i,i',j',j)$. This completes the proof of the lemma. 
	\end{proof}

	\begin{proof}[Proof of Lemma~\ref{lemma_existence_of_f}]
		We denote by $u(I)$ the ascending path associated to an ascent $I$, that is, if $I=(i_1,\dots,i_r)$, then $u(I)=(u(i_1),\dots,u(i_r))$. Let $P_w(u)^+$ be the set of all maximal ascending paths in $\Gamma_w(u)^+$. For $u(I),u(I')\in P_w(u)^+$, we define $u(I)>u(I)'$ if there are $i\in I$ and $i'\in I'$ such that $i<i'$ and $u(i)>u(i')$. The order $u(I)>u(I')$ is well-defined by Lemma~\ref{lemm:3-2}. We extend this partial order on $P_w(u)^+$ to a total order and take a function $f\colon P_w(u)^+\to \R$ preserving the total order. Since $P_w(u)^+$ defines a partition of $[n]$, the function $f$ induces a function on $[n]$, which we also denote by $f$. This function $f$ satisfies the condition in \eqref{eq:f}. Indeed, the former condition that $f(p)=f(q)$ if $(p,q)\in E_w(u)^+$ is obvious from our construction of $f$. To see the latter condition, we write $p=u(i)$ and $q=u(j)$ for $(p,q)\in E_w(u)^-$. Then $i<j$ and $u(i)>u(j)$. Therefore, the maximal ascending path containing $p=u(i)$ is bigger than that containing $q=u(j)$ with respect to the total order on $P_w(u)^+$. Hence $f(p)>f(q)$ since $f$ is an order preserving function. This proves the existence of the function $f$ in \eqref{eq:f}. 
	\end{proof}
	
	\section{Concluding remarks}
	\label{sec_concluding_remarks}
	
	The study of generic torus orbit closures in Schubert varieties is related to combinatorics of Bruhat interval polytopes and the recent study of retraction sequences of polytopes. In this paper, we have observed retraction sequences of a certain class of Bruhat interval polytopes and their combinatorics. Below, we briefly mention some possible avenues for further exploration.
	
	The generic torus orbit closures in Schubert varieties are related to Bruhat interval polytopes $\QQ_{id, w^{-1}}$. The polytopes $\QQ_{id, w^{-1}}$ are a special class of the Bruhat interval polytopes. In general, for $v,w \in \mathfrak{S}_n$ with $v\le w$, the Bruhat interval polytope $\QQ_{v^{-1},w^{-1}}$ is related to the generic torus orbit closure in the \defi{Richardson variety} $X^{v}_{w} \coloneqq X_{w} \cap w_0X_{w_0v}$. Here, $w_0$ is the longest element in $\mathfrak{S}_n$. 
	
	If the Bruhat interval polytope $\QQ_{v^{-1},w^{-1}}$ admits a retraction sequence with respect to a linear function $h \colon \R^n \to \R$ defined by the inner product with a vector $(a_1,\dots,a_n) \in \R^n$ with $a_1 > \cdots > a_n$, then the Poincar\'e polynomial of the generic torus orbit closure $Y_{v,w}$ in $X^{v}_{w}$ can be expressed by the polynomial
	\[
	A_{v,w}(t) \coloneqq \sum_{v \leq u \leq w} t^{a_{v,w}(u)}
	\]
	where $a_{v,w}(u)$ is the number of ascending edges emanating from the vertex $\baru$ of~$\QQ_{v^{-1},w^{-1}}$. Indeed, we have
	\[
	\P(Y_{v,w},t) = A_{v,w}(t^2).
	\]

	However, not every Bruhat interval polytope admits a retraction sequence in general. 
	For instance, the Bruhat interval polytope $\QQ_{1324, 4231}$ is not a simple polytope and there are eight simple vertices. See Figure~\ref{figure_BIP_1324_4231}. It is not difficult to check that there is no retraction sequence. Consequently, we propose the following question.
	\begin{Question}\label{question_1}
		Which pair of permutations $v$ and $w$ guarantees that the Bruhat interval polytope $\QQ_{v,w}$ admits a retraction sequence? 
	\end{Question}	
	
	\begin{figure}[t]
		\begin{tikzpicture}%
		[x={(-0.939161cm, 0.244762cm)},
		y={(0.097442cm, -0.482887cm)},
		z={(0.329367cm, 0.840780cm)},
		scale=1,
		back/.style={dashed, thin},
		edge/.style={color=black},
		facet/.style={fill=blue!95!black, fill opacity=0.100000},
		vertex/.style={inner sep=0.3pt,circle, draw=black, fill=blue!10,anchor=base, font=\scriptsize},
		vertex2/.style={inner sep=1.5pt,circle,draw=green!25!black,fill=red,thick,anchor=base},
		every label/.append style={text=black, font=\small}]
		%
		%
		\coordinate (123) at (1.00000, 2.00000, 3.00000);
		\coordinate (124) at (1.00000, 2.00000, 4.00000);
		\coordinate (132) at (1.00000, 3.00000, 2.00000);
		\coordinate (134) at (1.00000, 3.00000, 4.00000);
		\coordinate (142) at (1.00000, 4.00000, 2.00000);
		\coordinate (143) at (1.00000, 4.00000, 3.00000);
		\coordinate (213) at (2.00000, 1.00000, 3.00000);
		\coordinate (214) at (2.00000, 1.00000, 4.00000);
		\coordinate (231) at (2.00000, 3.00000, 1.00000);
		\coordinate (234) at (2.00000, 3.00000, 4.00000);
		\coordinate (241) at (2.00000, 4.00000, 1.00000);
		\coordinate (243) at (2.00000, 4.00000, 3.00000);
		\coordinate (312) at (3.00000, 1.00000, 2.00000);
		\coordinate (314) at (3.00000, 1.00000, 4.00000);
		\coordinate (321) at (3.00000, 2.00000, 1.00000);
		\coordinate (324) at (3.00000, 2.00000, 4.00000);
		\coordinate (342) at (3,4,2);
		\coordinate (341) at (3,4,1);
		\coordinate (412) at (4.00000, 1.00000, 2.00000);
		\coordinate (413) at (4.00000, 1.00000, 3.00000);
		\coordinate (431) at (4,3,1);
		\coordinate (421) at (4.00000, 2.00000, 1.00000);
		\coordinate (423) at (4.00000, 2.00000, 3.00000);
		\coordinate (432) at (4,3,2);

		\draw[edge] (312)--(314)--(134)--(132)--cycle;
		\draw[edge] (314)--(413)--(412)--(312);
		\draw[edge] (312)--(321)--(231)--(132);
		\draw[edge] (321)--(421)--(412);
		\draw[edge] (421)--(241)--(231);
		\draw[edge] (241)--(142)--(132);
		\draw[edge] (134)--(143)--(142);

		\draw[edge, back] (134)--(234)--(243);
		\draw[edge, back] (143)--(243)--(241);
		\draw[edge, back] (243)--(423)--(421);
		\draw[edge, back] (413)--(423)--(324);
		\draw[edge, back] (413)--(314)--(324)--(234);
		
		\node[vertex, label={left:{4123}}] at (412) {};
		\node[vertex, label= {above:{4132}}] at (413) {};
		\node[vertex, label= {right:{3124}}] at (312) {};
		\node[vertex, label = {above:{3142}}] at (314) {};
		\node[vertex, label={left:{4213}}] at (421) {};
		\node[vertex, label = {right:{4231}}] at (423) {};
		\node[vertex, label = {right:{3214}}] at (321) {};
		\node[vertex, label = {below:{3241}}] at (324) {};
		\node[vertex, label = {above:{2314}}] at (231) {};
		\node[vertex, label = {left:{2341}}] at (234) {};
		\node[vertex, label = {below:{2413}}] at (241) {};
		\node[vertex, label = {above:{2431}}] at (243) {};
		\node[vertex, label = {above:{1324}}] at (132) {};
		\node[vertex, label = {right:{1342}}] at (134) {};
		\node[vertex, label = {below:{1423}}] at (142) {};
		\node[vertex, label = {right:{1432}}] at (143) {};
		
		\fill[red] (321) circle (2pt);
		\fill[red] (231) circle (2pt);\fill[red] (234) circle (2pt);\fill[red] (324) circle (2pt);\fill[red] (412) circle (2pt);\fill[red] (413) circle (2pt);\fill[red] (142) circle (2pt);\fill[red] (143) circle (2pt);
		\end{tikzpicture}
		\caption{Bruhat interval polytope $\QQ_{1324, 4231}$ with vertices labeled by permutations $1324 \leq u \leq 4231$, where the red-colored vertices are simple.}
		\label{figure_BIP_1324_4231}
	\end{figure}
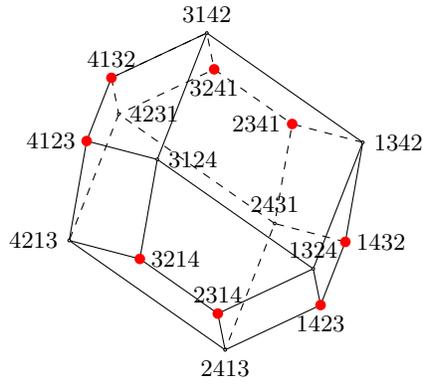

	The vertex $\overline{id}=(1,\dots,n)$ of $\QQ_{id, w^{-1}}$ is smooth for any $w \in \mathfrak{S}_n$ (see~\cite[Corollary~7.13]{le-ma20}), though the vertex $\bar{w}=(w^{-1}(1),\dots,w^{-1}(n))$ is not necessarily smooth. The smoothness of the vertex $\bar{w}$ can be determined in terms of graphs (see~\cite[Theorem~1.2]{le-ma20}). It is conjectured in~\cite[Conjecture~7.17]{le-ma20} that the Bruhat interval polytope $\QQ_{id, w^{-1}}$ is smooth if it is smooth at the vertex~$\bar{w}$. (This is equivalent to saying that the generic torus orbit closure $\Yw$ in $\Xw$ is smooth if it is smooth at the fixed point $wB$.) This conjecture  is related to the following question.

	\begin{Question}
		If the set $\left\{e_{w(i)}-e_{w(j)} \mid (w(i),w(j))\in E_w(w)^-=E_w(w)\right\}$ is linearly independent \textup{(}this is equivalent to $wB$ being a smooth point in $\Yw$\textup{)}, then is the set 
		\[
		\left\{e_{u(i)}-e_{u(j)} \mid (u(i),u(j))\in E_w(u)^-\right\}
		\]
		also linearly independent for each $u\le w$? 
	\end{Question}

	\begin{landscape}
		\begin{table}
			\begin{flushleft}

			\end{center}
			\caption{A retraction sequence of the Bruhat interval polytope $\QQ_{id, 3412^{-1}}$.}
			\label{table_retraction_seq_3412}
			
		\end{table}
	\end{landscape}

	\bibliographystyle{amsplain}
	\bibliography{ref}

\providecommand{\bysame}{\leavevmode\hbox to3em{\hrulefill}\thinspace}
\providecommand{\MR}{\relax\ifhmode\unskip\space\fi MR }
\providecommand{\MRhref}[2]{%
  \href{http://www.ams.org/mathscinet-getitem?mr=#1}{#2}
}
\providecommand{\href}[2]{#2}
\begin{thebibliography}{10}

\bibitem{a-ga-ul72}
Alfred~V. Aho, Michael~R. Garey, and Jeffrey~David Ullman, \emph{The transitive
  reduction of a directed graph}, SIAM J. Comput. \textbf{1} (1972), no.~2,
  131--137. \MR{0306032}

\bibitem{BNSS}
Anthony Bahri, Dietrich Notbohm, Soumen Sarkar, and Jongbaek Song, \emph{On
  integral cohomology of certain orbifolds}, International Mathematics Research
  Notices, [Online posted, 10-January-2019].

\bibitem{BSS17}
Anthony Bahri, Soumen Sarkar, and Jongbaek Song, \emph{On the integral
  cohomology ring of toric orbifolds and singular toric varieties}, Algebr.
  Geom. Topol. \textbf{17} (2017), no.~6, 3779--3810. \MR{3709660}

\bibitem{BB}
A.~Bia\l{}ynicki-Birula, \emph{Some theorems on actions of algebraic groups},
  Ann. of Math. (2) \textbf{98} (1973), 480--497. \MR{366940}

\bibitem{BjornerBrenti}
Anders Bj\"{o}rner and Francesco Brenti, \emph{Combinatorics of {C}oxeter
  groups}, Graduate Texts in Mathematics, vol. 231, Springer, New York, 2005.
  \MR{2133266}

\bibitem{BP15toric}
Victor~M. Buchstaber and Taras~E. Panov, \emph{Toric topology}, Mathematical
  Surveys and Monographs, vol. 204, American Mathematical Society, Providence,
  RI, 2015. \MR{3363157}

\bibitem{CarrellKurth20}
James~B. Carrell and Alexandre Kurth, \emph{Normality of torus orbit closures
  in {$G/P$}}, J. Algebra \textbf{233} (2000), no.~1, 122--134. \MR{1793593}

\bibitem{CLStoric}
David~A. Cox, John~B. Little, and Henry~K. Schenck, \emph{Toric varieties},
  Graduate Studies in Mathematics, vol. 124, American Mathematical Society,
  Providence, RI, 2011. \MR{2810322}

\bibitem{Fis}
Stephan Fischli, \emph{On toric varieties}, Ph.D. thesis, Universit{\"a}t Bern
  (1992).

\bibitem{Ful-Inter}
William Fulton, \emph{Intersection theory}, second ed., Ergebnisse der
  Mathematik und ihrer Grenzgebiete. 3. Folge. A Series of Modern Surveys in
  Mathematics [Results in Mathematics and Related Areas. 3rd Series. A Series
  of Modern Surveys in Mathematics], vol.~2, Springer-Verlag, Berlin, 1998.
  \MR{1644323}

\bibitem{ge-se87}
I.~M. Gel'fand and V.~V. Serganova, \emph{Combinatorial geometries and the
  strata of a torus on homogeneous compact manifolds}, Uspekhi Mat. Nauk
  \textbf{42} (1987), no.~2(254), 107--134, 287. \MR{898623}

\bibitem{Jor}
Arno Jordan, \emph{Homology and cohomology of toric varieties}, Ph.D. thesis,
  University of Konstanz (1998).

\bibitem{le-ma20}
Eunjeong Lee and Mikiya Masuda, \emph{Generic torus orbit closures in
  {S}chubert varieties}, J. Combin. Theory Ser. A \textbf{170} (2020), 105143,
  44. \MR{4016512}

\bibitem{LMP1}
Eunjeong Lee, Mikiya Masuda, and Seonjeong Park, \emph{Toric {B}ruhat interval
  polytopes}, arXiv preprint arXiv:1904.10187.

\bibitem{McConnell}
Mark McConnell, \emph{The rational homology of toric varieties is not a
  combinatorial invariant}, Proc. Amer. Math. Soc. \textbf{105} (1989), no.~4,
  986--991. \MR{954374}

\bibitem{SS18}
Soumen Sarkar and Jongbaek Song, \emph{Generalized equivariant cohomologies of
  singular toric varieties}, arXiv preprint, arXiv:1810.11974v2 [math.AT]
  (2018).

\bibitem{ts-wi15}
Emmanuel Tsukerman and Lauren Williams, \emph{Bruhat interval polytopes}, Adv.
  Math. \textbf{285} (2015), 766--810. \MR{3406515}

\bibitem{Yamanaka}
Hitoshi Yamanaka, \emph{Stratifications on generic torus orbit closures in
  {S}chubert varieties of type~{A}}, in preparation.

\end{thebibliography}
	
\end{document}